\newtheorem{theorem}{Theorem}[section]
\newtheorem{lemma}[theorem]{Lemma}
\newtheorem{proposition}[theorem]{Proposition}
\newtheorem{corollary}[theorem]{Corollary}
\theoremstyle{definition}
\newtheorem{definition}[theorem]{Definition}
\newtheorem{example}[theorem]{Example}
\newtheorem{construction}[theorem]{Construction}
\theoremstyle{remark}
\newtheorem{remark}[theorem]{Remark}
\numberwithin{equation}{section}
\begin{document}

\title[Definable transformation to normal crossings]
      {Definable transformation to \\
      normal crossings over Henselian fields \\
      with separated analytic structure}

\author[Krzysztof Jan Nowak]{Krzysztof Jan Nowak}


\keywords{separated analytic structure, strong analytic functions, resolution of singularities, transformation to normal crossings, closedness theorem, quantifier elimination, definable retractions}

\date{}

\begin{abstract}
We are concerned with rigid analytic geometry in the general setting of Henselian fields $K$ with separated analytic structure, whose theory was developed by Cluckers--Lipshitz--Robinson. It unifies earlier work and approaches of numerous mathematicians. Separated analytic structures admit reasonable relative quantifier elimination in a suitable analytic language. However, the rings of global analytic functions with two kinds of variables seem not to have good algebraic properties such as Noetherianity or excellence. Therefore the usual global resolution of singularities from rigid analytic geometry is no longer at our disposal. Our main purpose is to give a definable version of the canonical desingularization algorithm (the hypersurface case) due to Bierstone--Milman so that both these powerful tools are available in the realm of non-Archimedean analytic geometry at the same time. It will be carried out within a category of definable, strong analytic manifolds and maps, which is more flexible than that of affinoid varieties and maps. Strong analytic objects are those definable ones that remain analytic over all fields elementarily equivalent to $K$. This condition may be regarded as a kind of symmetry imposed on ordinary analytic objects. The strong analytic category makes it possible to apply a model-theoretic compactness argument in the absence of the ordinary topological compactness. On the other hand, our closedness theorem enables application of resolution of singularities to topological problems involving the topology induced by valuation. Eventually, these three results will be applied to such issues as the existence of definable retractions or extending continuous definable functions. The established results remain valid for strictly convergent analytic structures, whose classical examples are complete, rank one valued fields with the Tate algebras of strictly convergent power series. The earlier techniques and approaches to the purely topological versions of those issues cannot be carried over to the definable settings because, among others, non-Archimedean geometry over non-locally compact fields suffers from lack of definable Skolem functions.
\end{abstract}

\maketitle

\section{Introduction}

We are concerned with rigid analytic geometry in the general setting of Henselian fields $K$ with separated analytic structure (with two kinds of variables: ones vary over the closed unit ball $K^{\circ}$ and the other ones over the open unit ball $K^{\circ \circ}$), whose theory was developed by Cluckers--Lipshitz--Robinson \cite{C-Lip-R,C-Lip-0,C-Lip}. It unifies earlier work and approaches of numerous mathematicians (see e.g.\ \cite{De-Dries,Dries,Lip,Dries-Mac,Dries-Has,L-R-0,L-R}).

\vspace{1ex}

Separated analytic structures, unlike strictly convergent ones, admit reasonable quantifier elimination, relative to the auxiliary sorts, in suitable analytic extensions of the 3-sorted language of Denef--Pas~\cite{Pa1} or 2-sorted language of Basarab--Kuhlmann~\cite{Bas,Ku-1}. However, the rings of global analytic functions with two kinds of variables seem not to have good algebraic properties such as Noetherianity or excellence (cf.~\cite[Remark~5.2.9]{C-Lip-0}). Thus the usual global resolution of singularities from rigid analytic geometry is no longer at our disposal.

\vspace{1ex}

The main purpose of this paper is to give a definable version of the canonical desingularization algorithm due to Bierstone--Milman~\cite{BM} in the hypersurface case (where the concepts of strict and weak transforms coincide). Thus the two powerful tools, quantifier elimination and resolution of singularities, will be available in the realm of non-Archimedean analytic geometry at the same time. The algorithm provides a local invariant such that blowing up its maximum strata leads to desingularization or transformation to normal crossings (op.cit., Theorems~1.6 and~1.10). This will be accomplished in Section~3 within a certain category of definable, strong analytic manifolds and maps, which is introduced and examined in Section~2. One of the essential ingredients of our approach is the closedness theorem from our papers~\cite{Now-1,Now-2,Now-3}. The strong analytic category takes into account all those fields which are elementarily equivalent to the ground field $K$, and makes it possible to apply a model-theoretic compactness argument in the absence of the ordinary topological compactness. It is more flexible than that of affinoid varieties and maps, and allows us to introduce in a geometric way the concepts of a blowup along a smooth strong analytic center and of (weak) transform.

\vspace{1ex}

On the other hand, the closedness theorem enables application of resolution of singularities to topological problems which involve the topology induced by valuation. Eventually, both these results, along with elimination of valued field quantifiers, will be applied in Section~4 to such issues as the existence of definable retractions or extending continuous definable functions, including the theorems of Tietze--Urysohn and Dugundji (cf.~\cite{Now-4,Now-5}).

\vspace{1ex}

Making use of the closedness theorem, local transformation to normal crossings, elimination of valued field quantifiers and relative (to some auxiliary, imaginary, linearly ordered sorts) quantifier elimination for ordered abelian groups, we established in the papers~\cite{Now-1,Now-2,Now-3} a lot of new results as, for instance: piecewise continuity of definable functions, several versions of the \L{}ojasiewicz inequalities and of curve selection over arbitrary, Henselian, non-trivially valued, equicharacteristic zero fields (including the non-algebraically closed ones), as well as many their further applications. Let us emphasize that the \L{}ojasiewicz inequalities and curve selection were known before only in the case of algebraically closed valued fields.

\vspace{1ex}

Observe that the established results remain valid for strictly convergent analytic structures, because every such a structure can be extended in a definitional way to a separated analytic structure (cf.~\cite{C-Lip}). Classical examples of them are complete, rank one valued fields with the Tate algebras of strictly convergent power series.

\vspace{1ex}

Note that our treatment of the problems from Section~4 via strong analytic maps allows us not to appeal to the theory of quasi-affinoid subdomains developed by Lipshitz--Robinson~\cite{L-R-0}. And that the earlier techniques and approaches to the purely topological versions of those problems cannot be carried over to the definable settings because, among others, non-Archimedean geometry over non-locally compact fields suffers from lack of definable Skolem functions. For a more detailed discussion about their classical, purely topological counterparts (see e.g.\ \cite{El-1,El-2,K-K-S}), we refer the reader to our paper~\cite{Now-4}.

\vspace{1ex}

In Section~5, we discuss certain intricacies of non-Archimedean analytic geometry and give some background behind quantifier elimination. Also emphasized are some advantages of the approach proposed in this paper.

\vspace{1ex}

Now, following the papers~\cite{C-Lip-0,C-Lip}, we remind the reader of the concept of an analytic structure. Fix a Henselian, non-trivially valued field $K$ of equicharacteristic zero; $K$ may not be algebraically closed.
Denote by $v$, $\Gamma = \Gamma_{K}$, $K^{\circ}$, $K^{\circ \circ}$ and $\widetilde{K}$ the valuation, its value group, the valuation ring (closed unit ball), maximal ideal (open unit ball) and residue field, respectively. The multiplicative norm corresponding to $v$ will be denoted by $| \cdot |$. The $K$-topology on $K^{n}$ is the one induced by valuation $v$. Observe that the $K$-topology is totally disconnected, and that a closed unit ball is a disjoint union of infinitely many open unit balls (since the field $K$ is not locally compact).

\vspace{1ex}

Let $K$ be a field with a separated analytic $\mathcal{A}$-structure over a separated Weierstrass system $\mathcal{A} = \{ A_{m,n} \}_{m,n \in \mathbb{N}}$, i.e.\ with a collection $\{ \sigma_{m,n} \}_{m,n \in \mathbb{N}}$ of homomorphisms from $A_{m,n}$ to the ring of $K^{\circ}$-valued functions on $(K^{\circ})^{m} \times (K^{\circ \circ})^{n}$. We consider the ground field $K$ in the analytic language $\mathcal{L} = \mathcal{L}_{\mathcal{A}}$ from~\cite{C-Lip-0}. It is a two sorted, semialgebraic language $\mathcal{L}_{Hen}$, augmented on the main, valued field sort $K$ by the multiplicative inverse $(\cdot)^{-1}$ and the names of all functions of the collection $\mathcal{A}$, together with the induced language on the auxiliary sort $RV(K)$. Power series $f$ from $A_{m,n}$ are construed as $f^{\sigma}=\sigma(f)$ via the analytic $\mathcal{A}$-structure on their natural domains and as zero outside them.

\vspace{1ex}

Without changing the family of definable sets, one can assume that the homomorphism $\sigma_{0,0}$ from $A_{0,0}$ into $K^{\circ}$ is injective (whence so are the homomorphisms $\sigma_{m,n}$), which will be adopted in the sequel. Recall further that the field $K$ has $\mathcal{A}(K)$ structure by extension of parameters (cf.~\cite[Section~4.5]{C-Lip-0} and also \cite[Section~2]{Now-3}); more generally, $K$ has $\mathcal{A}(F)$ structure for any subfield $F$ of $K$.
Let $T_{\mathcal{A}}$ be the $\mathcal{L}_{\mathcal{A}}$-theory of all Henselian, non-trivially valued fields of equicharacteristic zero with analytic $\mathcal{A}$-structure.

\vspace{1ex}

The field $K$ is embeddable into every model $L$ of the $\mathcal{L}_{\mathcal{A}(K)}$-theory of $K$. Under the above assumption of injectivity, the field $L$ has $\mathcal{A}(F)$-structure for any subfield $F$ of $L$.
By abuse of notation, we shall then identify the power series $f \in A_{m,n}^{\dag}(K) := K \otimes_{K^{\circ}} A_{m,n}(K)$ with their interpretations $f^{\sigma}$ on their natural domains. The rings $A_{m,n}^{\dagger}(K)$ of global analytic functions seem to suffer from lack of good algebraic properties. Only the rings $A_{m,0}^{\dagger}(K)$ and $A_{0,n}^{\dagger}(K)$ of power series with one kind of variables enjoy very good algebraic properties being, for instance, Noetherian, factorial, normal and excellent (as they fall under the Weierstrass--R\"{u}ckert theory; cf.~\cite[Section~5.2]{C-Lip-0} and~\cite[Section~5.2]{B-G-R}). Therefore, the techniques of resolution of singularities by Bierstone--Milman~\cite{BM} or Temkin~\cite{Tem-2} cannot be directly applied to them (and thus on the global space $M_{0} = (K^{\circ})^{m} \times (K^{\circ \circ})^{n}$).

\vspace{1ex}

We shall show that the output data of the algorithm are strong analytic if so are the input data and, consequently, that the desingularization invariant takes only finitely many values and its equimultiple loci are definable. Actually, the resolution process works for arbitrary strong analytic functions on strong analytic manifolds.
Our approach, pursued in Section~3, is based on analysis of the data from which the invariant is built, which in turn relies principally on the following four crucial points:

\vspace{1ex}

1. The functions and submanifolds involved in the resolution process, are definable and strong analytic. Consequently, via a model-theoretic compactness argument, the orders of those functions are definable, i.e.\ their equimultiple loci are finite in number and definable. This enables further analysis of the entries $\nu_{r}(a)$ of the invariant, which are a kind of higher order, rational multiplicities of certain strong analytic functions. Hence and by the canonical character of the process, the successive centers of blowups, being the maximum strata of the desingularization invariant, are definable and strong analytic.

\vspace{1ex}

2. The entries $\nu_{r}(a)$ can be defined by computations which involve orders of vanishing in suitable local coordinates (independently of their choice) induced by generic affine coordinates of the ambient affine space. Therefore, such computations can be performed through suitable definable families of coordinates induced by affine coordinates. This is of great importance, especially in the absence of definable Skolem functions. Hence $\nu_{r}(a)$ turn out to be definable, i.e.\ their equimultiple loci are finite in number and definable.

\vspace{1ex}

3. Making use of the closedness theorem, it is possible to partition each ambient manifold, achieved by blowing up, into a finite number of definable clopen pieces so that, on each of them, both the exceptional hypersufaces (which reflect the history of the process and enable the further construction of the desingularization invariant) and next the successive blowup, can be described in a definable geometric way. This geometric bypass compensate for inability to globally describe the centers of the successive blowups in a purely analytic way, which is caused by lack of good algebraic properties of the rings of global analytic functions.

\vspace{1ex}

4. The canonical algorithm depends only on the completions of the local rings of analytic function germs at the points of the ambient manifolds. Therefore, finite partitions of those manifold into definable clopen pieces do not affect its output data, although quasi-affinoid structure may change. This legitimizes partitions indicated above.

\section{Strong analyticity: blowups and (weak) transforms}

Strong analyticity is a model-theoretic strengthening of the weak concept of analyticity determined by a given separated Weierstrass system (treated in the classical case e.g.~by Serre~\cite{Se}), which works well within the definable settings. By strong analytic functions and manifolds, we mean the analytic ones that are definable in the structure $K$ and remain analytic in each field $L$ elementarily equivalent to $K$ in the language $\mathcal{L}_{\mathcal{A}(K)}$. Examples of such functions and manifolds are those obtained by means of the implicit function theorem and the zero loci of strong analytic submersions.

\vspace{1ex}

From now on, "definable" will mean "$\mathcal{L}_{\mathcal{A}(K)}$-definable". Since all analytic functions and manifolds occurring in the resolution process turn out to be strong analytic, the words "definable, strong analytic" will be shortened for simplicity to "analytic".

\vspace{1ex}

Let $f: M \to K$ be an analytic function on an analytic manifold $M$ (by the above convention, in the category of strong analytic manifolds). By $\mathrm{supp}\, f$, the support of $f$, we mean the closure (in the $K$-topology) of the complement of its zero locus $V(f)$. It is not difficult to check that $\mathrm{supp}\, f$ is a clopen definable subset and that the order of vanishing $\mu_{a}(f) \in \mathbb{N}$ of $f$ at a point $a \in \mathrm{supp}\, f$ is finite; obviously, $\mu_{a}(f) = \infty$ iff $a \in \mathrm{supp}\, f$. The following basic result on order of vanishing will often be used in the sequel.

\begin{proposition}\label{order}
Under the above assumptions, the set of orders of vanishing $\{ \mu_{x}(f): a \in M \}$ is finite. Moreover, the conclusion remains true for definable families of analytic functions
\end{proposition}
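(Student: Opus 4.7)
The plan is to argue by contradiction using the model-theoretic compactness argument signalled in point~1 of the introduction. Suppose toward contradiction that the set of finite orders $\{ \mu_{a}(f) : a \in \mathrm{supp}\, f \}$ is infinite; then for every $n \in \mathbb{N}$ there is a witness $a_{n} \in \mathrm{supp}\, f$ with $\mu_{a_{n}}(f) \geq n$. The first thing I would verify is that, for each fixed $n$, the locus
\[
E_{n} := \{ a \in M : \mu_{a}(f) \geq n \}
\]
is $\mathcal{L}_{\mathcal{A}(K)}$-definable. In a local strong analytic chart around any point, $f$ is represented by a power series from the separated Weierstrass system $\mathcal{A}(K)$, and $E_{n}$ is cut out by the simultaneous vanishing of $f$ together with all its partial derivatives of order $<n$ in those chart coordinates. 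The partial derivatives are again strong analytic (formal differentiation lands inside the Weierstrass system) and hence definable, and the resulting condition is chart-independent, so $E_{n}$ is globally definable on $M$. The set $\mathrm{supp}\, f$ is definable as well, since it equals $M \setminus \mathrm{int}(V(f))$.

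Next I would introduce the partial type $\Sigma(x)$ over $K$, in the language $\mathcal{L}_{\mathcal{A}(K)}$, consisting of the formula "$x \in M \cap \mathrm{supp}\, f$" together with the formulas "$x \in E_{n}$" for each $n \in \mathbb{N}$. By the assumption on $\{a_{n}\}$, every finite subset of $\Sigma(x)$ is realized in $K$, so $\Sigma$ is finitely satisfiable and therefore realized by a point $a^{*} \in M(L)$ in some sufficiently saturated elementary extension $L \succeq K$. Strong analyticity now bites: $f$ remains analytic on $M(L)$, so in a local chart around $a^{*}$ it is represented by an element of the Weierstrass system over $L$. Since $a^{*} \in E_{n}(L)$ for every $n$, all Taylor coefficients of that power series at $a^{*}$ vanish; consequently $f \equiv 0$ on a $K$-topological neighborhood of $a^{*}$ in $M(L)$. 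Hence $a^{*}$ lies in the interior of $V(f)$, so $a^{*} \notin \mathrm{supp}\, f$, contradicting the fact that $a^{*}$ realizes $\Sigma$. This proves the first assertion.

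For the "moreover" part, I would apply the same argument uniformly in the family parameter: given a definable family $\{f_{t}\}_{t \in T}$ of strong analytic functions, the sets $E_{n} \subseteq T \times M$ defined by $\mu_{a}(f_{t}) \geq n$ and $a \in \mathrm{supp}\, f_{t}$ are again definable, and realizing the corresponding type in an elementary extension produces a pair $(t^{*}, a^{*})$ at which $f_{t^{*}}$ vanishes to infinite order while $a^{*} \in \mathrm{supp}\, f_{t^{*}}$, yielding the same contradiction. The main obstacle, and the whole reason the strong analytic category was introduced in Section~2, lies in the transfer step: one has to know that the local Weierstrass representation of $f$ persists in every elementary extension $L$, so that "infinite order of vanishing at $a^{*}$" genuinely forces local identical vanishing of $f$ in $L$. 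For a merely definable analytic function this could fail; strong analyticity is precisely the axiom that legitimises this transfer, and the only real work is to unpack it chart by chart so that the compactness argument can be applied cleanly on the global manifold $M$.
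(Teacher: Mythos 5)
Your proposal is correct and is exactly the ``routine model-theoretic compactness argument'' that the paper invokes in one sentence: the paper's proof consists solely of the remark that the assertion follows from strong analyticity via compactness, and your write-up (definability of the equimultiple loci $E_{n}$, finite satisfiability of the type asserting infinite order on the support, realization in a saturated elementary extension, and strong analyticity forcing identical vanishing near the realizing point) is the natural unpacking of that same argument, including the uniform version for definable families.
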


\begin{proof}
The assertion follows directly, via a routine model-theoretic compactness argument, from the assumption of strong analyticity.
\end{proof}

\begin{remark}
It follows immediately from the proposition that the sets of orders of vanishing $\{ \mu_{x}(f): a \in M \}$ over the fields $L$ elementarily equivalent
to $K$ in the language $\mathcal{L}_{\mathcal{A}(K)}$ coincide.
\end{remark}

Let $C$ be a closed analytic submanifold of $M$. Likewise in the classical case, we can define the order of the analytic function $f$ along $C$ at a point $a \in C$ by putting
$$\mu_{C,a}(f) := \min \, \{ \mu_{x}(f): \, x \in C \ \text{near} \ a \}. $$
Then $\mu_{C,a}(f)$ takes only finitely many values and is constant on clopen definable subsets $F_{k}$. Using the closedness theorem, it is not difficult to show the following

\begin{proposition}\label{ord-part}
Under the above assumptions, there are a finite number of pairwise disjoint, clopen definable subsets $\Omega_{k}$ of $M$ covering $C$ such that the order of vanishing $\mu_{C,a}(f)$ is constant on $C \cap \Omega_{k}$. Further, we can ensure that $f$ vanishes on $\Omega_{k}$ if $\mu_{C,a}(f) = \infty$ on $C \cap \Omega_{k}$.  \hspace*{\fill} $\Box$
 \end{proposition}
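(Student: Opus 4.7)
The plan is to first use Proposition~\ref{order} to finitize the values of $\mu_x(f)$, then identify the definable level sets of $\mu_{C,a}(f)$ on $C$, and finally extend these to clopen definable subsets of $M$ via the closedness theorem. The main obstacle, which the closedness theorem is tailor-made for, will be the passage from $C$ to $M$ in a uniformly definable way.

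By Proposition~\ref{order}, $\mu_x(f)$ takes values in a finite set $\{d_1 < \cdots < d_N\}$, possibly together with $\infty$. The super-level sets $D_k := \{x \in M : \mu_x(f) \geq d_k\}$ are closed definable, closedness following from upper semi-continuity of $\mu_x$ in the $K$-topology (a consequence of the analyticity of $f$ and continuity of its Taylor coefficients), and $D_\infty := M \setminus \mathrm{supp}\, f$ is clopen by the remark preceding Proposition~\ref{order}. For $a \in C$, $\mu_{C,a}(f) \geq d_k$ iff $a \notin \overline{C \setminus D_k}$ (closure in $M$), and the closedness theorem guarantees that this closure is a closed definable subset of $M$. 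Hence the level sets $E_k := \{a \in C : \mu_{C,a}(f) = d_k\}$ (and $E_\infty$) are definable, partition $C$, and establish the finiteness of the values of $\mu_{C,a}(f)$. Since $\{\mu_{C,\cdot} \geq d_k\}$ is open in $C$, the function $\mu_{C,\cdot}$ is lower semi-continuous and $\bigcup_{j < k} E_j = \{\mu_{C,\cdot} < d_k\}$ is closed in $C$, hence closed in $M$ because $C$ is closed.

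To construct the $\Omega_k$, observe that for each $k$ (including $k = \infty$) the closed definable set $A_k := \overline{C \setminus D_k} \cup \bigcup_{j < k} E_j$ is disjoint from $E_k$. In the ultrametric $K$-topology each $a \in E_k$ sits in a clopen ball of $M$ disjoint from $A_k$, and a suitable radius $r_a$ can be chosen as a definable function on $E_k$ — this is where the closedness theorem does the essential work, ensuring that the distance from $a$ to the closed definable set $A_k$ is itself definable and strictly positive. Setting $\Omega_k^0 := \bigcup_{a \in E_k} B(a, r_a)$ yields a clopen definable subset of $M$ containing $E_k$, with $\Omega_k^0 \cap C \subseteq D_k$ and $\Omega_k^0 \cap E_j = \emptyset$ for $j < k$. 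To obtain a pairwise disjoint family, I process the indices from the top downward (with $\infty$ as the largest): set $\Omega_\infty := \Omega_\infty^0$ and inductively $\Omega_k := \Omega_k^0 \setminus \bigcup_{j > k} \Omega_j$. Each $\Omega_k$ remains clopen and definable and still contains $E_k$, because the sets $\Omega_j$ with $j > k$ avoid $E_k$ by construction of $\Omega_j^0$. On $C \cap \Omega_k$ one has $\mu_{C,a}(f) \geq d_k$ (from $\Omega_k \subseteq \Omega_k^0$ open with $\Omega_k^0 \cap C \subseteq D_k$), while a point with $\mu_{C,a}(f) = d_j$ for $j > k$ would lie in $E_j \subseteq \Omega_j$ and thus be removed from $\Omega_k$; hence $\mu_{C,a}(f) = d_k$ on $C \cap \Omega_k$.

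Finally, if $\mu_{C,a}(f) = \infty$ on $C \cap \Omega_\infty$, every such $a$ has $\mu_a(f) = \infty$, so $a \notin \mathrm{supp}\, f$ and $C \cap \Omega_\infty \subseteq M \setminus \mathrm{supp}\, f$. Since $M \setminus \mathrm{supp}\, f$ is clopen, replacing $\Omega_\infty$ by $\Omega_\infty \cap (M \setminus \mathrm{supp}\, f)$ preserves $C \cap \Omega_\infty$ and enforces $f \equiv 0$ on the new $\Omega_\infty$. The principal difficulty throughout is the definable, uniform separation of $E_k$ from $A_k$ by clopen balls: without local compactness of $K$ the classical compactness argument is unavailable, and it is precisely the closedness theorem, together with the ultrametric structure, that supplies the definable separating neighborhoods and keeps the $\Omega_k$ within the definable category.
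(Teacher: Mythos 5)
Your overall architecture is sound and matches what the paper intends (the paper offers no written proof beyond the sentence preceding the statement): finiteness of the value set from Proposition~\ref{order}, definable level sets of $\mu_{C,\cdot}(f)$ on $C$, and extension to pairwise disjoint clopen definable neighbourhoods in $M$ by an ultrametric, top-down disjointification. The individual reductions you perform correctly: $\mu_{C,a}(f)\geq d_{k}$ iff $a\notin\overline{C\setminus D_{k}}$, the removal scheme $\Omega_{k}:=\Omega_{k}^{0}\setminus\bigcup_{j>k}\Omega_{j}$, and the final adjustment of $\Omega_{\infty}$ inside $M\setminus\mathrm{supp}\,f$ are all fine.

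There is, however, a genuine gap at the one point the proposition turns on: you never show that the level sets $E_{k}=\{a\in C:\mu_{C,a}(f)=d_{k}\}$ are \emph{closed}. What you establish is only that $\{\mu_{C,\cdot}\geq d_{k}\}$ is open in $C$ and $\{\mu_{C,\cdot}<d_{k}\}$ is closed in $C$, i.e.\ that $E_{k}$ is locally closed. This matters because the clopen-ness of $\Omega_{k}^{0}=\bigcup_{a\in E_{k}}B(a,r_{a})$ is asserted rather than proved: pointwise positivity of $d(a,A_{k})$ is automatic for $a\notin A_{k}$ with $A_{k}$ closed and needs no closedness theorem, but it does not yield closedness of the union. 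An ultrametric computation shows that $\overline{\Omega_{k}^{0}}\setminus\Omega_{k}^{0}\subseteq\overline{E_{k}}\cap A_{k}$, so your construction works precisely when $\overline{E_{k}}\cap A_{k}=\emptyset$, i.e.\ when $E_{k}$ is closed --- and a priori a point of $\overline{E_{k}}$ could have $\mu_{C,\cdot}$-value $d_{j}<d_{k}$ and hence lie in $A_{k}$. The missing ingredient is that $\mu_{C,\cdot}(f)$ is in fact locally constant on $C$: one has
$\{a\in C:\mu_{C,a}(f)\geq d\}=\mathrm{int}_{C}(C\cap D_{d})=C\setminus\bigcup_{|\alpha|<d}\mathrm{supp}\bigl(\partial^{\alpha}f|_{C}\bigr)$,
and since the supports of the (strong analytic) restricted derivatives are clopen in $C$ by the discussion preceding Proposition~\ref{order}, each super-level set, hence each $E_{k}$, is clopen in $C$ and therefore closed in $M$. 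With that in hand your argument goes through; alternatively, once $E_{k}$ and $A_{k}$ are known to be disjoint \emph{closed} definable sets, the closedness theorem gives a uniform $\epsilon>0$ with $d(E_{k},A_{k})>\epsilon$, and $\bigcup_{a\in E_{k}}B(a,\epsilon)$ is then clopen at once. As a minor point, the definability of $\overline{C\setminus D_{k}}$ also does not require the closedness theorem; its genuine role here is the uniform separation just described.
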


\vspace{1ex}

{\em A definable construction of the blowup along $C$.} Suppose $C$ is a closed (definable) analytic submanifold of $M_{0} = (K^{\circ})^{k} \times (K^{\circ \circ})^{l}$ of dimension $p \leq n-1$ with $n = k+l$, and consider the finite subsets $I$ of $\{ 1,\ldots,n \}$ of cardinality $p$. Define the canonical projections $\phi_{I}$ onto the $x_{I}$ variables by putting
$$ (\phi_{I}(x))_{i} = \left\{ \begin{array}{cl}
                        x_{i} & \mbox{ if } i \in I, \\
                        0 & \mbox{ otherwise }.
                        \end{array}
               \right.
$$
Let $U_{I}$ be the set of all points $a \in C$ at which the restriction of $\phi_{I}$ to $C$ is an immersion. Obviously, the sets $U_{I}$ are a finite open definable covering of $C$. By \cite[Corollary~2.4]{Now-4} (being a consequence of the closedness theorem), there exists a finite clopen definable partition $\Omega_{I}$ of $C$ such that $\Omega_{I} \subset U_{I}$. For a fixed $I$, denote  the restriction of $\phi_{I}$ to $\Omega_{I}$ by $\phi$. The fibres of $\phi$ are of course finite. It follows from the closedness theorem that $\phi$ is a definably closed map. Therefore, for any $a \in \Omega_{I}$  and a neighbourhood $U$ of $\phi^{-1}(\phi(a))$, there exists a neighbourhood $V$ of $\phi(a)$ such that $\phi^{-1}(V) \subset U$. Hence and by the implicit function theorem, the set
$$ F := \{ (a,z) \in \Omega_{I} \times M_{0}: \ \exists \: x \in \Omega_{I} \ x \neq a, \ \phi(x) = \phi(a), \ z=x-a \} $$
is a closed definable subset of $\Omega_{I} \times M_{0}$. It follows from the closedness theorem, that there is an $\epsilon \in |K |$, $\epsilon > 0$, such that $|a-x|> \epsilon$ for every $a,x \in \Omega_{I}$ with $\phi(a) = \phi(x)$ and $a \neq x$. Then the restriction of $\phi_{I}$ to the clopen subset
$$ B(a,\epsilon) \cap \phi^{-1}(\phi(\Omega_{I} \cap B(a,\epsilon))) $$
is injective whence a bianalytic map onto the clopen image (by the closedness theorem again). Put
$$ \Omega_{I}^{*} := \bigcup_{a \in \Omega_{I}} \ (B(a,\epsilon) \cap \phi_{I}^{-1} (\phi_{I}(\Omega_{I} \cap B(a,\epsilon)))). $$

\vspace{1ex}

\begin{lemma}
$\Omega_{I}^{*}$ is a closed subset of $M_{0}$ whence a clopen neighbourhood of $\Omega_{I}$.
\end{lemma}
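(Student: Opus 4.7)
The plan is to prove that $\Omega_I^*$ is clopen in $M_0$ by verifying openness and closedness separately; it contains $\Omega_I$ trivially, since each $a \in \Omega_I$ lies in $B(a,\epsilon) \cap \phi_I^{-1}(\phi_I(\Omega_I \cap B(a,\epsilon)))$ (take $x = a$). Openness is easy: each constituent $T_a := B(a,\epsilon) \cap \phi_I^{-1}(V_a)$ with $V_a := \phi_I(\Omega_I \cap B(a,\epsilon))$ is in fact clopen in $M_0$, because $B(a,\epsilon)$ is clopen by the ultrametric and $V_a$ is clopen in $\phi_I(M_0)$---closed by the closedness theorem applied to the restriction of $\phi_I$ to the closed bounded set $\Omega_I$, and open by the local bianalyticity of $\phi_I$ on $\Omega_I \cap B(a,\epsilon)$ (from the immersion hypothesis combined with $\epsilon$-injectivity).

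For closedness, I would first simplify the description of $\Omega_I^*$ via the ultrametric: $y \in \Omega_I^*$ if and only if there exists $x \in \Omega_I$ with $\phi_I(x) = \phi_I(y)$ and $|y-x| < \epsilon$. Indeed, $|y-a|<\epsilon$ together with $|x-a|<\epsilon$ forces $|y-x|<\epsilon$, while conversely one may choose $a := x$. It then suffices to show that the complement is open. Fix $y \notin \Omega_I^*$. If $\phi_I(y) \notin \phi_I(\Omega_I)$, the closedness theorem makes $\phi_I(\Omega_I)$ closed, and the $\phi_I$-pullback of an open neighbourhood of $\phi_I(y)$ disjoint from it furnishes the required open neighbourhood of $y$.

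Otherwise the fibre $F_y := \phi_I^{-1}(\phi_I(y)) \cap \Omega_I$ is a nonempty finite set $\{x_1, \ldots, x_k\}$, with $|y-x_i| \geq \epsilon$ for each $i$. Choose pairwise disjoint clopen neighbourhoods $U_i \subset \Omega_I$ of $x_i$ on which $\phi_I$ is bianalytic onto a clopen image. A second application of the closedness theorem gives that $\phi_I(\Omega_I \setminus \bigcup_i U_i)$ is closed and misses $\phi_I(y)$; so on a sufficiently small open neighbourhood $W$ of $y$ (intersecting the $\phi_I$-preimage of that complement with $B(y,\epsilon)$ and with neighbourhoods on which each $(\phi_I|_{U_j})^{-1}(\phi_I(y'))$ lies in $B(x_j,\epsilon)$), every $y' \in W$ satisfies $F_{y'} \subset \bigcup_i U_i$, and each element of $F_{y'}$ has the form $x_j' := (\phi_I|_{U_j})^{-1}(\phi_I(y'))$ with $|x_j'-x_j|<\epsilon$. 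Then by ultrametric rigidity (namely $|u+v|=|u|$ when $|v|<|u|$), one obtains $|y'-x_j'|=|y-x_j|\geq\epsilon$, and hence $y' \notin \Omega_I^*$.

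The principal obstacle is this fibre-control step---ensuring that no new preimages of $\phi_I(y')$ in $\Omega_I$ appear outside $\bigcup_i U_i$ as $y'$ varies near $y$. This is precisely the role of the second invocation of the closedness theorem applied to $\Omega_I \setminus \bigcup_i U_i$; the remaining ingredients---local bianalyticity of $\phi_I$ on each $U_i$ (via the immersion hypothesis and the inverse function theorem) and the ultrametric stability of the norm under perturbations of strictly smaller size---are standard non-Archimedean tools.
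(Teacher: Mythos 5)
Your proof is correct, but it takes a genuinely different and noticeably heavier route than the paper's. You establish closedness by showing that the complement of $\Omega_I^*$ is open: after reducing membership in $\Omega_I^*$ to the condition that there exists $x\in\Omega_I$ with $\phi_I(x)=\phi_I(y)$ and $|y-x|<\epsilon$ (a correct and useful ultrametric simplification of the defining union), you control the finite fibre $F_{y'}$ for $y'$ near $y$ by a second application of the closedness theorem to $\Omega_I\setminus\bigcup_i U_i$, by local invertibility of $\phi_I$ on each $U_i$, and by the rigidity $|u+v|=|u|$ when $|v|<|u|$. The paper argues dually and more economically: for $b$ in the closure of $\Omega_I^*$, the ultrametric forces any witnessing ball $B(a,\epsilon)$ meeting a small ball around $b$ to coincide with $B(b,\epsilon)$, so $\phi_I(b)$ lies in the closure of $\phi_I(\Omega_I\cap B(b,\epsilon))$; a single application of the closedness theorem makes that image closed, producing $c\in\Omega_I\cap B(b,\epsilon)$ with $\phi_I(c)=\phi_I(b)$ and hence $b\in\Omega_I^*$ directly. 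The paper's version needs no fibre analysis, no finiteness of fibres, and no local inverses; yours, in exchange, exhibits an explicit clopen neighbourhood of each point of the complement and makes the role of the $\epsilon$-separation of the fibres of $\phi_I|_{\Omega_I}$ more visible. Both arguments rest on the same essential input, namely that the closedness theorem forces images of the form $\phi_I(\Omega_I\cap B)$ to be closed.
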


\begin{proof}
Indeed, take a point $b$ from the closure of $\Omega_{I}^{*}$. Then
$$ B(b,\delta) \cap \Omega_{I}^{*} \neq \emptyset $$
for every $\delta < \epsilon$. Further,
$$ B(b,\delta) \cap B(a,\delta) \cap \phi_{I}^{-1} (\phi_{I}(\Omega_{I} \cap B(a,\epsilon))) \neq \emptyset $$
for some $a \in \Omega_{I}$. Hence
$$ B(b,\delta) \cap \phi_{I}^{-1} (\phi_{I}(\Omega_{I} \cap B(b,\epsilon))) \neq \emptyset $$
and
$$ \phi_{I}(B(b,\delta) \cap \phi_{I}(\Omega_{I} \cap B(a,\epsilon)) \neq \emptyset, \ \ B(\phi_{I}(b),\delta) \cap \phi_{I}(\Omega_{I} \cap B(a,\epsilon)) \neq \emptyset. $$
Thus $\phi_{I}(b)$ lies in the closure of $\phi_{I}(\Omega_{I} \cap B(a,\epsilon))$. Since, by the closedness theorem, this is a closed subset, we get $\phi_{I}(b) \in \phi_{I}(\Omega_{I} \cap B(a,\epsilon))$. Therefore $\phi_{I}(b) = \phi_{I}(c)$ for some $c \in \Omega_{I} \cap B(b,\epsilon)$. Then
$$ b \in B(c,\epsilon) \cap \phi_{I}^{-1} (\phi_{I}(\Omega_{I} \cap B(c,\epsilon)))), $$
and thus $b \in \Omega_{I}^{*}$, as required.
\end{proof}

\vspace{1ex}

We can regard the restrictions of $\phi_{I}$ to $\Omega_{I}^{*}$ as coordinate charts on $C$ in analogy to regular coordinate charts considered in~\cite[Section~3]{BM}. What will play an important role is that every $\Omega_{I}$ is the zero locus of an analytic submersion $\theta: \Omega_{I}^{*} \to K^{q}$ defined as follows. Let $J := \{1,\ldots,n \} \setminus I$ and $\psi_{J}$ be the canonical projection onto the variables $x_{J}$. Any point $x \in \Omega_{I}^{*}$ lies in
$$ B(a,\epsilon) \cap \phi_{I}^{-1} (\phi_{I}(\Omega_{I} \cap B(a,\epsilon))) $$
for some $a \in \Omega_{I}$. Take a unique $y \in \Omega_{I} \cap B(a,\epsilon)$ such that $\phi_{I}(x) = \phi_{I}(y)$ and set $\theta(x) := \psi_{J}(x) - \psi_{J}(y)$.
This construction enables the standard definition of the blowup of $\Omega_{I}^{*}$ along $C$ which is an analytic submanifold of $\Omega_{I}^{*} \times \mathbb{P}^{q-1}(K)$.

\vspace{1ex}

We have thus constructed the blowups along $C$ of the pairwise disjoint, clopen definable neighbourhoods $\Omega_{I}^{*}$ of the clopen pieces $\Omega_{I}$ of the submanifold $C$ and the blowups of those neighbourhoods. On the complement
$$ M_{0} \setminus \bigcup_{I} \; \Omega_{I}^{*}, $$
which is a clopen subset of $M_{0}$, it suffices to define the blowup to be the identity. By gluing, we obtain the blowup $\sigma_{1}: M_{1} \to M_{0}$, which is an analytic map, where $M_{1}$ is an analytic submanifold of $M_{0} \times \mathbb{P}^{q-1}(K)$ and $q = n-p$ is the codimension of $C$ in $M_{0}$. We should once again emphasize that this construction is performed in the category of strong analytic manifolds and maps.

\vspace{1ex}

\begin{remark}\label{proj}
In this manner, the blowup $M_{1}$ can be further analyzed by using the $q$ standard affine clopen charts on $\mathbb{P}^{q-1}(K)$.
\end{remark}

The above construction leads to the following

\begin{definition}
Let $M$ be a closed analytic submanifold of dimension $m$ of $M_{0} : =(K^{\circ})^{k} \times (K^{\circ \circ})^{l}$, $\phi_{1},\ldots,\phi_{n}$ be affine coordinates on $M_{0}$ with $n :=k+l$, $\Omega^{*}$ be a clopen definable subset of $M_{0}$ and $\Omega := \Omega^{*} \cap M$. We say that $\phi_{1},\ldots,\phi_{m}$ are a definable coordinate system for $M$ on $\Omega^{*}$ if the restriction of $(\phi_{1},\ldots,\phi_{m})$ is an immersion of $\Omega$ such that for each point $x \in \Omega^{*}$ there is a unique point $y \in \Omega$  that is closest to $x$ from among $(\phi_{1},\ldots,\phi_{m})^{-1}(x) \cap \Omega$. We then call $\Omega$ a definable chart with coordinates $\phi_{1},\ldots,\phi_{m}$ on $\Omega^{*}$. As demonstrated above, $\Omega$ is then the zero locus of an analytic submersion $\theta: \Omega^{*} \to K^{n-m}$.
\end{definition}

Summing up, we have proven the following

\begin{proposition}
Every closed analytic submanifold $M$ of dimension $m$ in $M_{0} = (K^{\circ})^{k} \times (K^{\circ \circ})^{l}$ can be partitioned into a finite number of pairwise disjoint, clopen definable charts $\Omega$ with coordinates on clopen subsets $\Omega^{*}$ of $M_{0}$.  Moreover, $M \cap \Omega^{*}$ is the zero locus of an analytic submersion $\theta: \Omega^{*} \to K^{n-m}$.   \hspace*{\fill} $\Box$
\end{proposition}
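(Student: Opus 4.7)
The plan is to apply to $M$ the construction developed in this section for the center $C$ of a blowup, and to verify that its output fits the definition of a definable chart with coordinates given just above. No new ideas are needed beyond what has already been assembled; the proposition is essentially a summary.

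First I set $p := m$, $q := n-m$, and, for each $m$-element subset $I \subset \{1,\ldots,n\}$, let $\phi_I$ denote the canonical projection of $M_0$ onto its $x_I$-variables. The locus $U_I := \{a \in M : \phi_I|_M \text{ is an immersion at } a\}$ is open definable in $M$ (cut out by the nonvanishing of an $m \times m$ minor of the Jacobian of any local submersion defining $M$), and the $U_I$ cover $M$ since $M$ is everywhere $m$-dimensional. Applying Corollary~2.4 of~\cite{Now-4}, this finite open definable covering admits a refinement to a finite partition of $M$ into pairwise disjoint clopen definable pieces $\Omega_I \subset U_I$.

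Next, for each fixed $I$ I run the construction that appears immediately before the proposition. By the closedness theorem, the finite-to-one map $\phi_I|_{\Omega_I}$ is definably closed, which yields a uniform $\epsilon \in |K|$ with $\epsilon > 0$ such that distinct points of $\Omega_I$ sharing a $\phi_I$-image are separated by distance greater than $\epsilon$. The set
$$ \Omega_I^* := \bigcup_{a \in \Omega_I} \bigl( B(a,\epsilon) \cap \phi_I^{-1}(\phi_I(\Omega_I \cap B(a,\epsilon))) \bigr) $$
is clopen in $M_0$ by the lemma already proven, and the formula $\theta(x) := \psi_J(x) - \psi_J(y)$, where $J := \{1,\ldots,n\} \setminus I$ and $y$ is the (unique by construction) point of $\Omega_I \cap B(a,\epsilon)$ with $\phi_I(y) = \phi_I(x)$, defines an analytic submersion $\theta: \Omega_I^* \to K^{n-m}$ whose zero locus in $\Omega_I^*$ is exactly $\Omega_I = M \cap \Omega_I^*$.

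What remains is to verify the uniqueness-of-closest-point condition in the definition of a definable coordinate system: for every $x \in \Omega_I^*$ there is a unique $y \in \Omega$ minimizing $|x-y|$ among those with $\phi_I(x) = \phi_I(y)$. This is where the choice of $\epsilon$ pays off: any two distinct minimizers would lie in a common ball around $x$ of radius at most $\epsilon$, hence would be within distance $\epsilon$ of each other, contradicting the separation bound. I expect this uniqueness check to be the only genuinely nontrivial step; everything else is a direct reading of the preceding construction. With it in hand, $\Omega_I$ is a definable chart on $\Omega_I^*$ with coordinates $\phi_{i_1},\ldots,\phi_{i_m}$, and the family $\{\Omega_I\}$ partitions $M$ as required.
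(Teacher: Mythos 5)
Your proposal is correct and takes essentially the same route as the paper: the proposition is explicitly presented as a summary of the immediately preceding construction (the sets $U_I$, the clopen partition $\Omega_I\subset U_I$ via Corollary~2.4 of~\cite{Now-4}, the separation constant $\epsilon$ from the closedness theorem, the clopen neighbourhoods $\Omega_I^*$, and the submersion $\theta$), applied with $C=M$, which is exactly what you reproduce. Your explicit ultrametric check that the minimizing point $y\in\Omega_I$ is unique (two minimizers would be within distance $\epsilon$ of each other, contradicting the separation bound) is a detail the paper leaves implicit, and it is sound since the fibres of $\phi_I|_{\Omega_I}$ are finite so the minimum is attained.
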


\begin{corollary}\label{zero-loc}
Let $C \subset M$ be two closed analytic submanifolds of $M_{0}$ of dimension $p$ and $m$, respectively. Then there exist a finite number of pairwise disjoint, clopen definable subsets $U_{s}$ of $M$ which cover $C$ and such that $C \cap U_{s}$ are the zero loci of some analytic submersions $\theta_{s}: U_{s} \to K^{m-p}$. In particular, if $C$ is a hypersurface in $M$, then $C \cap U_{s} = V(\theta_{s})$ for some analytic submersions $\theta_{s}: U_{s} \to K$.   \hspace*{\fill} $\Box$
\end{corollary}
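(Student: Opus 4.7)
The plan is to apply the preceding proposition to $C$ itself (viewed as a closed analytic submanifold of $M_{0}$ of dimension $p$) and then to reduce the resulting codimension-$(n-p)$ description in $M_{0}$ to a codimension-$(m-p)$ description along $M$, via a suitable coordinate projection of the target $K^{n-p}$ onto $K^{m-p}$.

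First I would invoke the preceding proposition on $C$ to obtain a finite clopen definable partition $C = \bigsqcup_{t} \tilde{\Omega}_{t}$ with each $\tilde{\Omega}_{t} \subset \tilde{\Omega}_{t}^{*}$ clopen in $M_{0}$, together with analytic submersions $\tilde{\theta}_{t} : \tilde{\Omega}_{t}^{*} \to K^{n-p}$ cutting out $C \cap \tilde{\Omega}_{t}^{*}$. Setting $W_{t} := M \cap \tilde{\Omega}_{t}^{*}$ gives clopen subsets of $M$ whose union covers $C$. At each point $a \in C \cap W_{t}$, the differential $d(\tilde{\theta}_{t})_{a}$ is surjective onto $K^{n-p}$ with kernel $T_{a}C \subset T_{a}M$, so its restriction to $T_{a}M$ has image of dimension exactly $m-p$; hence some coordinate subset $L \subset \{1,\ldots,n-p\}$ of size $m-p$ makes $\pi_{L} \circ \tilde{\theta}_{t}|_{W_{t}}$ a submersion to $K^{m-p}$ near $a$. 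The rank-$(m-p)$ loci of the finitely many candidates $\pi_{L} \circ \tilde{\theta}_{t}|_{W_{t}}$ are definable open subsets of $W_{t}$ whose union covers $C \cap W_{t}$. Using \cite[Corollary~2.4]{Now-4} together with the closedness-theorem technique of the construction preceding the statement, I would refine this cover to a finite clopen definable partition $\{Q_{t,s}\}$ of a clopen neighbourhood of $\tilde{\Omega}_{t}$ in $W_{t}$, with each $Q_{t,s}$ contained in one rank-$(m-p)$ locus, and set $\theta_{t,s} := \pi_{L_{s}} \circ \tilde{\theta}_{t}$ on $Q_{t,s}$. By construction each $\theta_{t,s}$ is an analytic submersion to $K^{m-p}$ with $V(\theta_{t,s}) \supset C \cap Q_{t,s}$.

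The main obstacle is that $V(\theta_{t,s})$ may a priori be strictly larger than $C \cap Q_{t,s}$. Both are closed analytic $p$-dimensional submanifolds of $Q_{t,s}$; a tangent-space computation shows that they have the same tangent space at every point of $C$, so by the uniqueness part of the implicit function theorem they coincide in a sufficiently small ball around each such point. Strong analyticity (via the model-theoretic compactness argument of Proposition~\ref{order}) together with the closedness theorem lets me promote this pointwise ball radius to a uniform $\varepsilon \in |K|$, so that $C \cap Q_{t,s}$ and $V(\theta_{t,s}) \setminus C$ become disjoint closed definable subsets of $Q_{t,s}$ separable by a clopen definable neighbourhood $U_{t,s}$ of $C \cap Q_{t,s}$. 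A final disjointification produces the required pairwise disjoint clopen definable cover of $C$ on which the restricted submersions $\theta_{s} : U_{s} \to K^{m-p}$ satisfy $V(\theta_{s}) = C \cap U_{s}$; the hypersurface case is the specialization $m-p = 1$.
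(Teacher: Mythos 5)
Your argument is correct, but it takes a genuinely different route from the one the paper intends. The corollary is stated with a $\Box$ because it is meant to follow by simply re-running the chart construction (the one producing $\Omega_I$, $\Omega_I^*$ and the submersion $\theta:\Omega_I^*\to K^{q}$) with $M$ in place of the ambient $M_0$: one first partitions $M$ into definable charts via the preceding proposition, identifies each chart with a clopen definable subset of an $m$-dimensional coordinate space, and then applies the same construction to $C$ inside that space, which directly yields submersions into $K^{m-p}$. You instead stay in $M_0$: you cut out $C$ by a codimension-$(n-p)$ submersion $\tilde\theta_t$, restrict to $M$, and discard $n-m$ of the equations by composing with a coordinate projection $\pi_L$ chosen so that $\pi_L\circ d\tilde\theta_t$ remains surjective on $T_aM$. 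This is sound, and your handling of the two resulting issues is the right one: the finitely many rank loci give a definable open cover refinable to a clopen partition by \cite[Corollary~2.4]{Now-4}, and the excess zero locus $V(\theta_{t,s})\setminus C$ is closed and disjoint from $C\cap Q_{t,s}$, hence removable by a clopen definable separation (again the closedness theorem). One small imprecision: the local coincidence of $C$ and $V(\theta_{t,s})$ near points of $C$ does not follow from equality of tangent spaces plus ``uniqueness in the implicit function theorem'' alone (two curves can be tangent without coinciding); what actually does the work is the inclusion $C\cap Q_{t,s}\subset V(\theta_{t,s})$ of two $p$-dimensional submanifolds, which forces $C\cap Q_{t,s}$ to be open, hence clopen, in $V(\theta_{t,s})$. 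Since you do have that inclusion, the argument stands. The trade-off is that the paper's route produces $V(\theta_s)=C\cap U_s$ on the nose and needs no excision step, whereas yours avoids re-running the construction relative to $M$ at the cost of the extra separation argument.
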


Using the above methods, we can obtain, in the category of strong analytic manifolds and maps, the following characterization of normal crossing divisors, the detailed verification being left to the reader.

\begin{corollary}
Let $f: M \to K$ be a analytic function on an analytic submanifold of $M_{0}$. If $f$ is a normal crossing divisor (in the usual sense), then there exists a finite partition of $M$ into clopen definable subsets $\Omega_{s}$ and, for each $s$, analytic submersions
$$ \theta_{s_{1}}, \ldots, \theta_{s,l_{s}} : \Omega_{s} \to K \ \ \text{and} \ \ k_{1},\ldots,k_{l_{s}} \in \mathbb{N} $$
such that
$$ f \sim \theta_{s_{1}}^{k_{1}} \cdot \ldots \cdot \theta_{s,l_{s}}^{k_{l_{s}}} \, ; $$
here $\sim$ means equal up to an analytic unit.   \hspace*{\fill} $\Box$
\end{corollary}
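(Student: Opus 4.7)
My plan is to pass from the pointwise normal crossings hypothesis to the asserted global factorization by bounding the relevant numerical data with Proposition~\ref{order}, stratifying $M$ by this data into a finite definable partition, and then applying Corollary~\ref{zero-loc} on each piece to produce the submersions $\theta_{s,i}$. At every point $a \in M$ the normal crossings hypothesis supplies analytic local coordinates $y_1, \dots, y_m$ about $a$, an integer $r(a) \leq m$, natural numbers $k_1(a), \dots, k_{r(a)}(a)$, and an analytic unit near $a$ with $f = u \cdot y_1^{k_1(a)} \cdots y_{r(a)}^{k_{r(a)}(a)}$. Since $\mu_a(f) = k_1(a) + \cdots + k_{r(a)}(a)$ and Proposition~\ref{order} forces $\mu_a(f)$ to take only finitely many values on $M$, both the number of local branches $r(a)$ and the individual multiplicities are bounded uniformly in $a$.

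Next, the loci on which $r(a)$ and the unordered multiset of multiplicities take prescribed values are definable, so $M$ decomposes into a finite definable partition on each piece of which these numerical data are constant, and a further refinement via the closedness theorem, in the spirit of the partition argument preceding Corollary~\ref{zero-loc}, makes every piece clopen in $M$. Fix such a clopen piece $\Omega$ on which $r \equiv l$ with fixed multiplicities $k_1, \dots, k_l$. The $l$ irreducible local components of $V(f)$ through each point of $\Omega$ are smooth analytic hypersurfaces of $M$ meeting transversally, so after one more clopen refinement of $\Omega$ that separates the branches into pieces $\Omega_s$, each component appears on $\Omega_s$ as a closed analytic hypersurface, and Corollary~\ref{zero-loc} exhibits it as the zero locus $V(\theta_{s,i})$ of an analytic submersion $\theta_{s,i}\colon \Omega_s \to K$. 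The quotient
$$ f \,/\, \bigl( \theta_{s,1}^{k_1} \cdots \theta_{s,l}^{k_l} \bigr) $$
is then a strong analytic function on $\Omega_s$ that is pointwise nonvanishing by the local normal crossings description, hence an analytic unit, delivering the required factorization.

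The main obstacle is the branch-separation step: globally tracking the $l$ local components of $V(f)$ across a stratum on which the numerical data are constant, so that each of them extends to the zero locus of one definable submersion on a clopen piece. Classically one would invoke local irreducibility and local-to-global patching, but here neither the good algebraic properties of the rings $A_{m,n}^{\dagger}(K)$ nor definable Skolem functions are available. The closedness theorem is what bridges this gap: exactly as in the construction of the blowup charts $\Omega_{I}^{*}$ and the submersions of Corollary~\ref{zero-loc}, it converts the pointwise immersion data into a uniform definable description on finitely many clopen pieces, which is what legitimizes the partitioning underlying the whole argument.
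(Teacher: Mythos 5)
Your overall strategy --- bound the local data with Proposition~\ref{order}, cut $M$ into finitely many clopen definable pieces via the closedness theorem, and realize each branch as $V(\theta_{s,i})$ for a submersion obtained as in Corollary~\ref{zero-loc} --- is exactly what the paper has in mind (it gives no written proof, only the remark that the verification follows from ``the above methods''). However, there is a genuine flaw in your stratification step. The local branch count $r(a)$ and the multiset of local multiplicities are only \emph{upper semicontinuous}, not locally constant: for $f=\xi_{1}\xi_{2}$ on $(K^{\circ})^{2}$ one has $r=2$ exactly at the origin, $r=1$ on the punctured axes and $r=0$ elsewhere, and the origin lies in the closure of the locus $\{r=1\}$. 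Hence no refinement of this partition can ``make every piece clopen in $M$,'' and a clopen piece $\Omega$ ``on which $r\equiv l$'' with $l\geq 1$ does not exist (any clopen neighbourhood of a crossing point meets all the lower strata). This matters because the statement you are proving is specifically about clopen pieces, and because the intended reading of the conclusion is different from the one your argument produces: on $\Omega_{s}$ the $l_{s}$ submersions cut out $l_{s}$ \emph{global} branches of $V(f)\cap\Omega_{s}$, and at a given point only the subset of the $\theta_{s,i}$ vanishing there contributes to the local normal crossing coordinates, the remaining factors being units.

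The repair is to organize the argument around the global branch decomposition rather than the pointwise data. For instance, work down from the deepest stratum: the locus where the maximal number $d$ of branches cross is a \emph{closed} smooth submanifold (by upper semicontinuity), and the incidence set of pairs (point, local branch through it) is a definable set with finite fibres over it; the closedness theorem, exactly as in the construction of the neighbourhoods $\Omega_{I}^{*}$ in Section~2, lets you split this finite correspondence on clopen pieces and thereby label the $d$ branches globally on a clopen neighbourhood, after which Corollary~\ref{zero-loc} supplies the submersions and Proposition~\ref{ord-part} makes the order of $f$ along each labelled branch constant on a further clopen refinement. Your final step (the quotient $f/\prod\theta_{s,i}^{k_{i}}$ is pointwise a unit, hence an analytic unit) is then fine. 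So the key tools you name are the right ones, but the passage from pointwise normal crossings to a clopen partition cannot go through the level sets of $r(a)$ as you wrote it.
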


In view of the foregoing, we can readily construct in a definable way the transform of an analytic hypersurface as well.

\begin{construction}
Consider a blowup $\sigma_{1}: M_{1} \to M_{0}$ along smooth analytic center $C$ and with exceptional hypersurface $E$. Let $X$ be an analytic hypersurface of $M_{0}$ corresponding to an analytic function $f: M_{0} \to K$; put $f_{1} := f \circ \sigma_{1}$. By Corollary~\ref{zero-loc} and Proposition~\ref{ord-part}, there exist a finite number of pairwise disjoint, clopen subsets $U_{s}$ of $M_{1}$ which cover $E$ and such that $E \cap U_{s} = V(\theta_{s})$ for an analytic submersion $\theta_{s}$ and that the order of vanishing $\mu_{E,a}(f_{1}) = d_{s}$ is constant on $E \cap U_{s}$. Then the transform $X_{1}$ of $X$ is determined on $U_{s}$ by the analytic function
$$ f_{1} := \theta_{s}^{-d_{s}} \cdot f \circ \sigma_{1}; $$
actually, $d_{s}$ is the largest power of $\theta_{s}$ that factors from $g \circ \sigma_{1}$.
\end{construction}

\section{Definable desingularization algorithm}

In this section, the desingularization algorithm by Bierstone--Milman \cite[Chapter~II]{BM} will be adapted to the definable settings. To be brief, for majority of details the reader is referred to their paper. We give a concise outline of the process of transforming an analytic function $g \in A_{k,l}^{\dagger}(K)$ to normal crossings or, equivalently, resolving singularities of the hypersurface $X = X_{0} = V(g)$ of the manifold $M_{0} = (K^{\circ})^{k} \times (K^{\circ \circ})^{l}$ determined by $g$. The notation and terminology related to the local invariant for desingularization will generally follow those from op.cit.

\begin{remark}
The desingularization algorithm, and thus Theorems~\ref{main} and~\ref{main-2} as well, will of course hold whenever $g$ is a definable, strong analytic function on an arbitrary, definable, strong analytic manifold $M_{0}$.
\end{remark}

Consider a sequence of admissible blowups $\sigma_{j}: M_{j} \to M_{j-1}$ along admissible smooth centers $C_{j-1}$, $j=1,2,\ldots$;
let $E_{j}$ denote the set of exceptional hypersurfaces in $M_{j}$ (op.cit., p.~212). Let $X_{1}, X_{2}, X_{3}, \ldots$ denote the successive transforms of the given  hypersurface $X$; here the strict and weak transforms coincide. {\em Admissible} means that $C_{j}$ and $E_{j}$ simultaneously have only normal crossings
and that $\mathrm{inv}_{X}(\cdot)$ is locally constant on $C_{j}$ for all $j$. We can now state the main result, being a definable version of op.cit., Theorem~1.6.

\begin{theorem}\label{main}
Under the above assumptions, there exists a finite sequence of blowups with smooth admissible centers $C_{j}$ such that:

1) for each $j$, either $C_{j} \subset \mathrm{Sing}\, X_{j}$ or $X_{j}$ is smooth and $C_{j} \subset X_{j} \cap E_{j}$;

2) the final transform $X'$ of $X$ is smooth (unless empty), and $X'$ and the final exceptional hypersurface $E'$ simultaneously have only normal crossings.
\end{theorem}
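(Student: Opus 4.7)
The plan is to follow the canonical desingularization algorithm of Bierstone--Milman~\cite{BM}, adapting each step to the definable strong analytic category built in Section~2. The procedure is inherently local and inductive: at each stage $j$, given the blowups $\sigma_{j}\circ\cdots\circ\sigma_{1}\colon M_{j}\to M_{0}$, I must produce a smooth admissible center $C_{j}\subset M_{j}$ as the maximum stratum of the desingularization invariant $\mathrm{inv}_{X_{j}}$, verify that $X_{j}$, $E_{j}$ and $C_{j}$ all stay in the definable strong analytic category, apply the blowup construction of Section~2 to obtain $\sigma_{j+1}\colon M_{j+1}\to M_{j}$, and then form the new transform $X_{j+1}$ together with the new exceptional set $E_{j+1}$ via the Construction at the end of Section~2.

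The technical heart is to show that at every stage $\mathrm{inv}_{X_{j}}(\cdot)$ takes only finitely many values, that its equimultiple loci are definable, and that its maximum locus is a smooth definable strong analytic submanifold simultaneously having normal crossings with $E_{j}$. This is where the four crucial points listed in Section~1 do the work. First, by Proposition~\ref{order} applied to definable families of strong analytic functions, every order of vanishing that appears in the construction of the invariant takes only finitely many values; in particular, using Proposition~\ref{ord-part}, the orders along the exceptional hypersurfaces are constant on a finite clopen definable partition of each $M_{j}$. Second, the rational entries $\nu_{r}(a)$ of the invariant must be extracted from derivatives in local coordinates; following the strategy outlined in point~2 of Section~1, I would compute them through a definable family of local coordinate systems induced by generic affine coordinates on the ambient $(K^{\circ})^{k}\times(K^{\circ\circ})^{l}$, so that the computation is independent of the local choice and yields a definable function of $a$. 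This compensates for the absence of definable Skolem functions. Third, by Corollary~\ref{zero-loc} I can refine each partition further so that exceptional hypersurfaces and the centers themselves are globally zero loci of analytic submersions on each clopen piece, giving the explicit geometric data on which the blowup construction of Section~2 operates.

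Once $C_{j}$ has been shown to be a definable strong analytic smooth subvariety having normal crossings with $E_{j}$, admissibility in the sense of op.cit.\ follows because $\mathrm{inv}_{X_{j}}$ is then locally constant on $C_{j}$ by construction. The construction at the end of Section~2 produces $X_{j+1}$ on each chart of $M_{j+1}$, and glues to a globally defined strong analytic hypersurface because, by point~4 of Section~1, the invariant depends only on completions of local rings and is insensitive to refinement by finite clopen partitions. Termination is then inherited from Bierstone--Milman: $\mathrm{inv}_{X_{j}}$ strictly decreases in the lexicographic order under admissible blowups, and its values lie in a fixed well-ordered set of rational tuples, so after finitely many steps either $X_{j}$ becomes smooth and transverse to $E_{j}$, or the next admissible center is contained in $X_{j}\cap E_{j}$, yielding conclusions 1) and 2).

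The main obstacle will be bridging the gap between the local-ring formalism of Bierstone--Milman, which presupposes the excellence and Noetherianity that the rings $A_{m,n}^{\dagger}(K)$ lack, and the global definable setting here. The way around this is exactly the interplay described above: replace global algebraic arguments by model-theoretic compactness (for finiteness of orders), by generic affine coordinates (for definability of $\nu_{r}$ without Skolem functions), and by finite clopen partitions plus the closedness theorem (for the global geometric description of centers and exceptional divisors). Verifying carefully that these three ingredients can be threaded through every step of the Bierstone--Milman induction, without ever appealing to properties of the global ring $A_{k,l}^{\dagger}(K)$ that it does not enjoy, is the principal labour of the proof.
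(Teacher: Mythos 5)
Your proposal follows essentially the same route as the paper: walk through the Bierstone--Milman years, use Proposition~\ref{order} and Proposition~\ref{ord-part} (model-theoretic compactness) for finiteness and definability of the orders, compute the entries $\nu_{r}(a)$ through definable families of coordinates induced by generic affine coordinates, use Corollary~\ref{zero-loc} and clopen partitions via the closedness theorem for the global description of centers and exceptional divisors, and conclude by the standard termination argument. One caveat: you assert that the maximum stratum of $\mathrm{inv}_{X_{j}}$ is smooth, but in general it is only a normal crossings subvariety (when the last entry is $0$); the paper handles this by passing to the extended invariant $\mathrm{inv}_{X}^{e}(a)=(\mathrm{inv}_{X}(a);J(a))$ and devotes a separate argument (the formula~(*) on components of $S_{X}(a)$ and the total ordering on subsets of $E_{j}$) to showing that $J(\cdot)$ is definable, so this step should be made explicit in a complete write-up.
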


First we begin with the necessary notation:

\vspace{1ex}

$E(a) := \{ H \in E_{j}: a \in H \}$.

\vspace{1ex}

For a point $a = a_{j} \in M_{j}$, let $a_{j-1} \in M_{j-1}, \ldots, a_{0} \in M_{0}$
be the images of $a$ under the successive blowups.

\vspace{1ex}

The order of vanishing of an analytic function germ $f$ at $a$ is $\mu_{a}(f)$.

\vspace{1ex}

In each year $j$, the local invariant $\mathrm{inv}_{X}(a)$ at a point $a \in M_{j}$ is the word:
$$ \mathrm{inv}_{X}(a) = (\nu_{1(a)},s_{1}(a); \nu_{2}(a),s_{2}(a); \ldots; \nu_{t}(a),s_{t}(a);\nu_{t+1}(a)), $$
where $0 < \nu_{1}(a), \ldots ,\nu_{t}(a) \in \mathbb{Q}$, $s_{1}(a), \ldots ,s_{t}(a) \in \mathbb{N}$ and
$\nu_{t+1}(a) = 0$ or $\infty$; note that $t \leq n$ (op.cit., p.~213); $\nu_{1}(a) = \mu_{a}(g)$ where $g$ is a local equation at $a$ of $X$. We consider such words with the lexicographic ordering. The inductive resolution process terminates unless
$0 < \nu_{r}(a) < \infty$.

\vspace{1ex}

The invariant $\mathrm{inv}_{X}(\cdot)$ is upper semicontinuous (i.e.\ each point $a \in X_{j}$ admits
an open neighbourhood U such that $\mathrm{inv}_{X}(x) \leq \mathrm{inv}_{X}(a)$ for all $x \in U$) and infinitesimally upper-semicontinuous (i.e.\ $\mathrm{inv}_{X}(a) \leq \mathrm{inv}_{X}(\sigma_{j}(a)$ for all $j \geq 1$); op.cit., Theorem~1.14.

\vspace{1ex}

An infinitesimal presentation (of codimension $p$) is the following data (op.cit., p.~222):
$$ (N(a), \mathcal{H}(a),\mathcal{E}(a)) $$
where:

$N_{p}(a)$ is a germ at $a$ of a regular submanifold of codimension p;

$\mathcal{H}(a) = \{ (h,\mu_{h}) \}$ is a finite collection of pairs with
$h \in \mathcal{O}_{N,a}$, $\mu_{h} \in \mathbb{Q}$, $0 \leq \mu_{h} \leq \mu_{a}(h)$;

$\mathcal{E}(a)$ is a collection of smooth hypersurfaces $H \ni a$ such that $N$ and $\mathcal{E}(a)$
simultaneously have only normal crossings, and $N \not \subset H$ for all $H \in \mathcal{E}(a)$.

\vspace{1ex}

The equimultiple locus of the infinitesimal presentation is
$$ S_{\mathcal{H}(a)} := \{ x \in N: \mu_{x}(h) \geq \mu_{h} \ \ \forall \, (h,\mu_{h}) \in \mathcal{H}(a) \}; $$
put
$$ \mu_{\mathcal{H}(a)} := \min \left\{ \frac{\mu_{a}(h)}{\mu_{h}} \right\}. $$

\begin{remark}
In view of the canonical character of the resolution process, the maximum loci of the desingularization invariant (being at the same time the centers of the successive blowups) are strong analytic, because locally they are constructed within rigid analytic geometry based on rings with good algebraic properties.
\end{remark}

At this stage we can readily pass to the resolution process. The easiest is the initial year zero before any blowup.

\vspace{1ex}

{\em Year zero.} For each $a \in M_{0}$, we start with the following codimension 0 presentation for the equation $g$:
$$ (N_{0}(a), \mathcal{G}_{1}(a),\mathcal{E}_{1}(a)), \  N_{0}(a) = M_{0}, \ \mathcal{G}_{1}(a) = \{ (g, \mu_{a}(g)) \},
   \ \mathcal{E}_{1}(a) = \emptyset . $$
Put $d := \nu_{1}(a) = \mu_{1}(a)(g)$, $s_{1} := 0$ and $\mathcal{F}_{1}(a) = \mathcal{G}_{1}(a)$. The further definable constructions should take into account
the equimultiple strata of the entry $\nu_{1}$ (and in the further years, the equimultiple strata of the successive entries already constructed). Apply Construction~4.18, op.cit., to get a codimension 1 presentation $\mathcal{H}_{1}(a)$ as explained below.

\vspace{1ex}

First, consider the family of (all, for the sake of definability) suitable affine coordinates $x_{1}, \ldots, x_{n}$, $n=k+l$, at $a \in M_{0}$,
i.e.\ such affine coordinates that $\partial^{d}g/\partial \, x_{n}^{d} \, (a) \neq 0$ with $d := \mu_{a}(g)$. More precisely, two kinds of variables:
$\xi_{1},\ldots,\xi_{k}$ and $\rho_{1},\ldots,\rho_{l}$ occur here; the first ones vary over the closed unit ball $K^{\circ}$ and the second ones over the open unit ball $K^{\circ \circ}$. We can thus consider, among others, the family of affine coordinates of the form
$$ \xi_{1}' = \xi_{1} + u_{1} \rho_{l}, \ \ldots \ , \xi_{k}' = \xi_{1} + u_{k} \rho_{l}, \ \rho_{1}' = \rho_{1} + v_{1} \rho_{l}, \ \ldots \ ,
   \rho_{l}' = \rho_{l} + v_{1} \rho_{l}, $$
with $u_{1},\ldots,u_{k},v_{1},\ldots,v_{l} \in K^{\circ}$. For simplicity, we shall further write the coordinates $x_{1},\ldots,x_{n}$, considering the definable family of all suitable coordinates (coming from the affine ones in the ambient space), which of course depend on the point $a$. Finally, set
$$ N_{1}(a) = V( \partial^{d-1}g/\partial \, x_{n}^{d-1}), \ \ \mathcal{E}_{1}(a) = \emptyset , $$
$$ \mathcal{H}_{1}(a) = \left\{ (\partial^{q}g/\partial \, x_{n}^{q}|N_{1}(a), \, d-q), \; q=0,1,\ldots,d-2 \right\}, $$
and
$$ \nu_{2}(a) = \mu_{\mathcal{H}_{1}(a)} := \min \left\{ \frac{\mu_{a}(\partial^{q}g/\partial \, x_{n}^{q}|N_{1}(a))}{d-q}, \  q=0,\ldots,d-2 \right\}. $$
Notice that $N_{1}(a)$ can be regarded both as a codimension 1 submanifold in the open subset $M_{0} \setminus V(\partial^{d}g/\partial \, x_{n}^{d})$ (which is
beneficial for the analysis of definability) or as its germ (which is the case treated originally in the theory of infinitesimal presentations, op.cit.).
It follows directly from Proposition~\ref{order} that the entry $\nu_{2}$ takes only finitely many values and hence its equimultiple strata are definable.
Again, further definable constructions should take into account those strata. The same holds over each field $L$ elementarily equivalent to $K$ in the language $\mathcal{L}_{\mathcal{A}(K)}$ (with the same set of orders of vanishing).

\vspace{1ex}

Construction~4.23, op.cit., yields the codimension 1 presentation:
$$ \mathcal{F}_{2}(a) = \mathcal{G}_{2}(a) := $$
$$ \left\{ (\partial^{q}g/\partial \, x_{n}^{q}|N_{1}(a), \, (d-q) \cdot \nu_{2}(a)), \, q=0,1,\ldots,d-2 \right\}, $$
which  satisfies the conditions of Proposition~4.12, op.cit.; in particular, $\mu_{\mathcal{F}_{2}(a)} =1$. Why the construction falls into the three stages $\mathcal{G}$, $\mathcal{F}$ and $\mathcal{H}$ will be clear in the next years of the process.

\vspace{1ex}

Next, repeat Construction~4.18, op.cit. To this end,
consider again the family of suitable coordinates on $N_{1}(a)$ induced by generic affine coordinates on the ambient space, taking also into account the strata on which given pairs $(h,\mu_{h}) \in \mathcal{F}_{2}(a)$ satisfy the condition $\mu_{a}(h) = \mu_{h}$. In this way, we get a codimension 2 presentation $\mathcal{H}_{2}(a)$ determined by some definable data expressed in terms partial derivatives with respect to the definable family of suitable coordinates.

\vspace{1ex}

The resolution process will be continued until $\nu_{t+1} = 0$ or $\infty$, which must happen for a $t \leq n$. In year zero, however, we eventually get the invariant $\mathrm{inv}_{X}(a)$ of the form $(\ldots;\infty)$, whose maximum stratum $S = C_{0}$ is an analytic submanifold of $M_{0}$. After blowing up the stratum $S = C_{0}$, we pass to the next year.

\begin{remark}
The analysis on the successive spaces $M_{j}$, $j \geq 1$, comes down to the case of affine ambient spaces with affine coordinates via the standard charts on the projective spaces involved when blowing up.
\end{remark}

Suppose now that the process has been carried out in the years $0,1,2,\ldots,j$.

\vspace{2ex}

{\em Year $(j+1)$.} We have thus constructed the following sequence of blowups (op.cit., Section~1):
$$ \sigma_{j} : M_{j} \to M_{j-1}, \ \sigma_{j-1} : M_{j-1} \to M_{j-2}, \ \dots \ , \ \sigma_{1} : M_{1} \to M_{0}; $$
the centers $C_{k-1}$ of $\sigma_{k}$ are admissible and the exceptional hypersurfaces $E_{k}$ on $M_{k}$ and $C_{k}$ simultaneously have only normal crossings.

\vspace{1ex}

As before, for each $a \in M_{j}$, we start with the following codimension 0 presentation for the transform $g_{1}$ of $g$ under $\sigma_{1} \circ \ldots \circ \sigma_{j}$:
$$ (N_{0}(a), \mathcal{G}_{1}(a), \mathcal{E}_{1}(a)), \ \ N_{0}(a) = M_{1}, \ \ \mathcal{G}_{1}(a) = \{ (g_{1}, \mu_{a}(g_{1})) \},  $$
where $N_{0} = M_{1}$ and $\mathcal{E}_{1}(a))$ is defined as follows:

let $\nu_{1} := \mu_{a}(g_{1}))$, $i = i(a) \leq j$ be the smallest $k$ with $\nu_{1}(a) = \nu_{1}(a_{k})$,
$$ E^{1}(a) := \{ H \in E(a): \ H \ \text{is the transform of some element of} \ E(a_{i}) \}, $$

$s_{1}(a) := \sharp \ E^{1}(a)$ and $\mathcal{E}_{1}(a)) := E(a) \setminus E^{1}(a)$.

\noindent
Since the invariant $\mathrm{inv}_{X}(\cdot)$ constructed in the previous years takes only finitely many values and is both upper-semicontinuous and infinitesimally upper-semicontinuous, it is not difficult to check that the equimultiple strata of the invariant $i(\cdot)$ are definable, whence so are the families $E^{1}(\cdot)$ and $\mathcal{E}_{1}(\cdot)$.

\vspace{1ex}

Next, let $\mathcal{F}_{1}(a)$ be $\mathcal{G}_{1}(a)$ together with all pairs $(f,\mu_{f}) = (\theta_{H},1)$ with $H \in E^{1}(a)$, where $\theta_{H}$ is an analytic equation of $H$. By Corollary~\ref{zero-loc}, $\mathcal{F}_{1}(a)$ is determined by definable data. Now, apply Construction~4.18, op.cit., as in the year zero, to get a codimension 1 presentation
$$ (N_{1}(a), \mathcal{H}_{1}(a), \mathcal{E}_{1}(a)), $$
which is determined by definable data as well. Then
$$ \mu_{2}(a) := \mu_{\mathcal{H}_{1}(a)} = \infty \ \ \text{iff} \ \ \mathcal{H}_{1}(a) = 0. $$
If $\mu_{2}(a) < \infty$, set
$$ \mu_{2,H} := \min \left\{ \frac{\mu_{H,a}(h)}{\mu_{h}}: \ (h,\mu_{h}) \in \mathcal{H}_{1}(a) \right\}, \ \ H \in \mathcal{E}_{}(a) $$
and
$$ \mu_{2}(a) := \mu_{\mathcal{H}_{1}(a)}, \ \ \ \nu_{2}(a) := \mu_{2}(a) - \sum_{H} \mu_{2,H}(a). $$
By Proposition~\ref{ord-part}, the invariant $\nu_{2}(\cdot)$ takes only finitely many values and its equimultiple loci are definable.

If $\nu_{2}(a) = 0$ or $\infty$, set $\mathrm{inv}_{X}(a) := (\nu_{1}(a),s_{1}(a); \nu_{2}(a))$. Otherwise, apply Construction~4.23, op.cit., to get a codimension 1 presentation
$$ (N_{1}(a), \mathcal{G}_{2}(a), \mathcal{E}_{1}(a)) \ \ \ \text{with} \ \ \ \mu_{\mathcal{G}_{2}(a)} =1. $$
The construction consists in dividing the $h \in \mathcal{H}_{1}(a)$, previously scaled so that the $\mu_{h}$ are equal, by their greatest common divisor that is a monomial in the equations $\theta_{H}$ of $H \in \mathcal{E}_{1}(a)$. Hence and again by  Proposition~\ref{ord-part}, $\mathcal{G}_{2}(a)$ is determined by definable data.

\vspace{1ex}

Now, let $i = i(a) \leq j$ be the smallest $k$ such that
$$ (\nu_{1}(a),s_{1}(a);\nu_{2}(a)) = (\nu_{1}(a_{k}),s_{1}(a_{k});\nu_{2}(a_{k})), $$
$$ E^{2}(a) := \{ H \in \mathcal{E}_{1}(a): \ H \ \text{is the transform of some element of} \ \mathcal{E}_{1}(a_{i}) \}, $$
$s_{2}(a) := \sharp \ E^{2}(a)$ and $\mathcal{E}_{2}(a)) := \mathcal{E}_{1}(a) \setminus E^{1}(a)$. Then
$$ (N_{1}(a), \mathcal{G}_{2}(a), \mathcal{E}_{2}(a)) $$
is a codimension 1 presentation determined by definable data as well.

\vspace{1ex}

Next, let $\mathcal{F}_{2}(a)$ be $\mathcal{G}_{2}(a)$ together with all pairs $(f,\mu_{f}) = (\theta_{H},1)$ with $H \in E^{2}(a)$, where $\theta_{H}$ is an analytic equation of $H$. The process continues inductively until $\nu_{t+1} = 0$ or $\infty$ for a $t \leq n$, and eventually yields the invariant $\mathrm{inv}_{X}(\cdot)$ on $M_{j}$ which takes only finitely many values and whose equimultiple loci
$$ S_{X}(a) := \{ x \in M_{j}: \ \mathrm{inv}_{X}(x) = \mathrm{inv}_{X}(a) \}, \ \ a \in M_{j}, $$
are definable; $S_{X}(a)$ will also be regarded as a germ at $a$. Its maximum stratum $S$ is an analytic submanifold or a normal crossing submanifold according as its maximum value is $((\ldots;\infty)$ or $(\ldots;0)$. In the latter case, for any $a \in S$, the irreducible components $Z$ of $S_{X}(a)$ are of the form (op.cit., Theorem~1.14):
\begin{equation}\label{comp}
   Z = S_{X}(a) \cap \, \bigcap \, \{ H \in E(a): \ Z \subset H \}.
\end{equation}
Then, in order to eventually achieve a smooth maximum stratum, the invariant should be extended as outlined below.

\vspace{1ex}

Consider any total ordering on the collection of all subsets $I$ of $E_{j}$. Observe that whether the intersection $S_{X}(a) \cap \bigcap I$ is an analytic submanifold at $a$ is a definable property with respect to the points $a$ (which is expressed, in view of equality~\ref{comp}, by means of suitable coordinate projections). Therefore the components $Z$ of $S_{X}(a)$ can be defined by the following formula~(*):

\vspace{1ex}

\begin{em}
$Z = S_{X}(a) \cap \bigcap I$ is an analytic  submanifold at $a$ for some $I$ and, for every $J$, if $Z \subset S_{X}(a) \cap \bigcap J$ is an analytic submanifold, then $Z = S_{X}(a) \cap \bigcap J$.
\end{em}

\vspace{1ex}

The family of the components $Z$ of $S_{X}$ at the points $a$ is thus definable (consider the product of $\sharp \, E_{j}$ copies of $M_{j}$). For a component $Z$ at $a$, let $J(Z)$ be the set of all $H \in E_{j}$ containing $Z$. Set
$$ J(a) := \max \{ J(Z): \ Z \ \ \text{a component of} \ \ S_{X}(a) \}. $$
Then the index $J(a)$ is definable:

\vspace{1ex}

\begin{em}
$J(a) = I^{*}$ iff formula~(*) holds for $I^{*}$ at $a$ and, for every $I \varsupsetneq I^{*}$ and $J > I^{*}$, formula~(*) holds at $a$ neither for $I$ nor for $J$.
\end{em}

\vspace{1ex}

Extend the invariant on $M_{j}$  by putting
$$ \mathrm{inv}_{X}^{e}(a) := (\mathrm{inv}_{X}(a);J(a)). $$
Then the maximum locus of $\mathrm{inv}_{X}^{e}(\cdot)$ is smooth (op.cit., Remark~1.15). Actually, for any component $Z$ of the maximum locus of $\mathrm{inv}_{X}(\cdot)$ at a point $a$, one can choose an ordering above so that
$$ J(Z) = J(a) = \max E_{j}. $$
Therefore the component $Z$ extends to an analytic submanifold of $M_{j}$, the maximum locus of $\mathrm{inv}_{X}^{e}(\cdot)$. Furthermore, by choosing a suitable ordering on the subsets of each $E_{j}$, one achieves the extended invariant $\mathrm{inv}_{X}^{e}(\cdot)$ with the property that every germ $S_{\mathrm{inv}_{X}^{e}(a)}$ is smooth (op.cit., Remark~1.16). Hence the maximum locus of $\mathrm{inv}_{X}^{e}(\cdot)$ is an analytic submanifold of the ambient space (which means strong analytic, by the convention adopted in Section~2).

\vspace{2ex}

{\em Sketch of resolution of singularities.} Now we briefly outline the desingularization algorithm in the hypersurface case (op.cit., Theorem~1.6), which immediately yields transformation to normal crossings (op.cit., Theorem~1.10) as well. The proof, given in op.cit., Section~10, carries over verbatim to the definable settings treated here. It essentially relies on that the (extended) desingularization invariant takes only finitely many values and, though those values $\nu_{r}(\cdot)$ are merely rational numbers, it behaves as if those values were integers (unless $\infty$). This directly follows from the infinitesimal upper-semicontinuity of the invariant and its finitary character in each particular year of the process along with the estimates of denominators given below (op.cit., p.~214).

\vspace{1ex}

In each year of the process, the entries $\nu_{r}(a)$, $r = 2, \ldots, t \leq n$, are quotients of positive integers whose denominators are bounded in terms of the previous part of the invariant $\mathrm{inv}_{X}(a)$. More precisely, define recursively $e_{2}(a) := \nu_{1}(a)$ and $e_{r+1}(a) := \max \{ e_{r}(a)!, e_{r}(a)! \cdot \nu_{r}(a) \}$. Then \ $e_{r}(a)! \cdot \nu_{r}(a) \in \mathbb{N}$ \ and \ $e_{t+1}(a)! \cdot \mu_{t+1}(a) \in  \mathbb{N}$.

\vspace{1ex}

In each year $j$ of the process, the maximum locus $C_{j}$ of the invariant $\mathrm{inv}_{X}$ (or the extended invariant $\mathrm{inv}_{X}^{e}$ if $\nu_{t+1} = 0$ on the maximum locus of $\mathrm{inv}_{X}$) is smooth so that one can blow it up. For each point $a \in C_{j}$, if $\mathrm{inv}_{X}(a) = (\ldots ; \infty)$, then
$$ \mathrm{inv}_{X}(a') < \mathrm{inv}_{X}(a) \ \ \text{for all} \ \ a' \in \sigma_{j+1}^{-1}(a). $$
Otherwise (op.cit., Theorem~1.14), we get
$$ (\mathrm{inv}_{X}(a'),\mu_{X}(a')) < (\mathrm{inv}_{X}(a),\mu_{X}(a)) \ \ \text{for all} \ \ a' \in \sigma_{j+1}^{-1}(a), $$
where $\mu_{X}(a) = \mu_{t+1}(a)$ if $\nu_{t+1}(a) = 0$. Hence the maximum value of the invariant must decrease after a finite number of admissible blowups and, eventually, the transform $X_{j}$ becomes smooth.

\vspace{1ex}

However, some further admissible blowups may be needed in order to satisfy the requirement that the final transform $X_{k}$ and $E_{k}$ simultaneously have only normal crossings. To this end, one must blow up the successive maximum strata of the invariant $\mathrm{inv}_{X}$ until its parameter $s_{1}$ has decreased to zero everywhere on $X_{k}$ (op.cit., p.~285). Then we attain the final step of the desingularization process.  \hspace*{\fill} $\Box$

\vspace{1ex}

In a similar manner, we are able to achieve a definable version of transforming to normal crossings a sheaf of ideals $\mathcal{I} = \mathcal{I}_{0} \subset \mathcal{O}_{M_{0}}$ generated by a finite number of strong analytic functions $f_{1},\ldots,f_{s}$ on $M_{0}$. This process uses the successive weak transforms $\mathcal{I}_{j}$ of the ideal $\mathcal{I}$ when blowing up the maximal strata of the desigularization invariant (op.cit., Theorem~1.10). We adopt the previous notation and, for convenience, remind the reader the statement.

\begin{theorem}\label{main-2}
Under the above assumptions,  there exists a finite sequence of blowups with smooth admissible centers $C_{j}$ such that the final weak transform of $\mathcal{I}$ is $\mathcal{I}_{k} = \mathcal{O}_{M_{k}}$ and the pull-back $\sigma^{-1}(\mathcal{I}) \cdot \mathcal{O}_{M_{k}} = E_{k}$ of the sheaf of ideals $\mathcal{I}$ is a normal crossing divisor; here $\sigma$ is the composite of the $\sigma_{j}$.   \hspace*{\fill} $\Box$
\end{theorem}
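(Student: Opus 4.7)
The plan is to adapt the proof of Theorem~\ref{main} to ideal sheaves, following Bierstone--Milman (op.cit., Theorem~1.10). The key observation that gets the algorithm off the ground is that for a finitely generated ideal $\mathcal{I} = (f_1,\ldots,f_s)$ on a strong analytic manifold, the order function
\[ \mu_a(\mathcal{I}) := \min_{1 \leq i \leq s} \mu_a(f_i) \]
takes only finitely many values, and these are attained on definable equimultiple loci, by applying Proposition~\ref{order} to the finite definable family $\{f_1,\ldots,f_s\}$. This is precisely the foothold needed to run the algorithm of Section~3 with first entry $\nu_1(a) := \mu_a(\mathcal{I})$ in place of $\mu_a(g)$.

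First, in year zero I would replicate the codimension $0$ presentation
\[ (N_0(a),\, \mathcal{G}_1(a),\, \mathcal{E}_1(a)) \quad \text{with} \quad \mathcal{G}_1(a) = \{ (f_i,\nu_1(a)) : i=1,\ldots,s \},\ \mathcal{E}_1(a) = \emptyset, \]
and then apply Construction~4.18 of op.cit., exactly as in the hypersurface case, choosing at each point $a$ a suitable affine direction $x_n$ such that $\partial^{\nu_1(a)} f_i / \partial x_n^{\nu_1(a)}(a) \neq 0$ for some $i$. This selection is made within the definable family of affine coordinates on the ambient $M_0 = (K^{\circ})^k \times (K^{\circ\circ})^l$, so $N_1(a)$ and the attached data $\mathcal{H}_1(a)$ remain definable and strong analytic. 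The three-stage passage $\mathcal{G} \to \mathcal{F} \to \mathcal{H}$, the auxiliary invariants $s_r(a)$ and $i(a)$, and the extension to $\mathrm{inv}^e_\mathcal{I}(\cdot)$ with index $J(a)$ are carried over verbatim from Section~3.

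In subsequent years, I would track the weak transforms defined locally by $\mathcal{I}_j = \theta^{-\nu_1} \cdot \sigma_j^*(\mathcal{I}_{j-1})$, where $\theta$ is an analytic equation of the exceptional hypersurface on a clopen chart. Such charts exist by Corollary~\ref{zero-loc}, and the order of $\sigma_j^*(\mathcal{I}_{j-1})$ along each exceptional component is constant on the pieces of a finite clopen partition by Proposition~\ref{ord-part}; hence the weak transform is generated by definable, strong analytic functions on each piece, and the data of the invariant remain definable. Termination is guaranteed by the usual infinitesimal upper-semicontinuity argument (op.cit., Theorem~1.14) together with the denominator bound $e_r(a)! \cdot \nu_r(a) \in \mathbb{N}$ already used in the proof of Theorem~\ref{main}: after finitely many admissible blowups the maximum of $\mathrm{inv}_\mathcal{I}$ must reach $\nu_1 = 0$, which is exactly the condition $\mathcal{I}_k = \mathcal{O}_{M_k}$. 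At that moment $\sigma^{-1}(\mathcal{I}) \cdot \mathcal{O}_{M_k}$ is, on each clopen piece, a monomial in the analytic equations of the exceptional components, hence a normal crossing divisor by the normal crossing characterization at the end of Section~2.

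The main obstacle, as in Theorem~\ref{main}, is keeping rigorous control over definability and strong analyticity of the weak transform operation across successive blowups, since the factoring out of $\theta^{\nu_1}$ has to be performed chart-by-chart on the clopen partitions built in Section~2. Compatibility on overlaps is automatic for disjoint clopen pieces, so this bookkeeping, while delicate, is essentially routine once the four guiding principles from the introduction (strong analyticity, families of affine coordinates, the closedness theorem, and insensitivity of the local completions to clopen refinements) have been invoked. Beyond this, the proof is a direct transcription of the hypersurface argument, which is why the paper can state that it carries over verbatim.
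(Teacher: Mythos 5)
Your proposal is correct and follows essentially the same route as the paper, which proves Theorem~\ref{main-2} by running the definable desingularization algorithm of Section~3 on the successive weak transforms of $\mathcal{I}$, exactly as in Bierstone--Milman's Theorem~1.10, with Proposition~\ref{order}, Proposition~\ref{ord-part} and Corollary~\ref{zero-loc} supplying the definability and strong analyticity of the data on clopen pieces. In fact your write-up supplies more detail (the order function $\min_i \mu_a(f_i)$, the year-zero presentation for the generators, the chart-by-chart factoring of the exceptional equation) than the paper's own terse reduction to the hypersurface argument, and all of these details are consistent with it.
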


\vspace{1ex}

\section{Application to the problem of definable retractions}

In this section, we demonstrate applications of definable resolution of singularities to the problems of definable retractions and extending continuous definable functions. The main aim here is the following theorem on the existence of definable retractions onto an arbitrary closed definable subset, whereby definable non-Archimedean versions of the extension theorems by Tietze--Urysohn and Dugundji follow directly (cf.~\cite{Now-4,Now-5}, where also conducted is a more detailed discussion about their classical, purely topological counterparts).

\begin{theorem}\label{ret}
Consider definable, strong analytic functions $g_{1},\ldots,g_{r}$ on a strong analytic manifold $M$. Let $X := V(g_{1},\ldots,g_{r})$ be their zero locus and $A$ be a closed definable subset of $X$. Then there exists a definable retraction $X \to A$.
\end{theorem}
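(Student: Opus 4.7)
The plan is to reduce to the normal crossings case by applying Theorem~\ref{main-2} to the ideal generated by $g_{1},\ldots,g_{r}$, which produces a composite of admissible blowups with smooth strong analytic centers $\sigma = \sigma_{1} \circ \cdots \circ \sigma_{k}: M' \to M$ such that $\sigma^{-1}(X) = E' \subset M'$ is a normal crossing divisor. Because each $\sigma_{j}: M_{j} \to M_{j-1}$ is locally the blowup of a smooth center (with projective fibers that are definably compact by the closedness theorem), $\sigma$ restricted to $E'$ is a definably closed surjection $E' \twoheadrightarrow X$. After possibly one further pass of Theorem~\ref{main-2} applied to the ideal generated by the components of $E'$ together with a finite collection of definable functions cutting out $A$ piecewise, we may also assume that $\tilde{A} := \sigma^{-1}(A)$ is a closed union of strong analytic subsets compatibly positioned with respect to the normal crossing structure of $E'$.

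The second step is to construct a definable retraction $\tilde r: E' \to \tilde A$ in the resolved setting. Here the structural results of Section~2 apply: the Corollary on normal crossing divisors yields a finite clopen definable partition of $M'$ into pieces $\Omega_{s}$ on each of which $E' \cap \Omega_{s}$ is an explicit finite union of zero loci of analytic submersions $\theta_{s,1},\ldots,\theta_{s,l_{s}}: \Omega_{s} \to K$, i.e.\ "coordinate hyperplanes" in definable coordinates on $\Omega_{s}^{*}$. In these coordinates the closed definable subset $\tilde{A} \cap \Omega_{s}$ is a closed subset of a union of coordinate strata, and a retraction onto it can be defined by a nearest-point choice along the relevant $\theta$-directions. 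The existence of such a definable nearest-point map (despite the lack of definable Skolem functions) is exactly what the closedness theorem delivers, by giving definably closed projections whose images are again definable and on which the minimal distance is attained on a definable clopen slice.

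In the third step, push $\tilde{r}$ down to the desired retraction $r: X \to A$ by the formula $r(x) := \sigma(\tilde{r}(y))$ for any $y \in \sigma^{-1}(x) \cap E'$. Well-definedness requires $\sigma \circ \tilde{r}$ to be constant on fibers of $\sigma|_{E'}$; continuity and definability then follow from the fact that $\sigma|_{E'}$ is definably closed with definably compact fibers (closedness theorem), together with the definable quotient-like argument used in \cite{Now-4,Now-5}. The identity $r|_{A} = \mathrm{id}_{A}$ is automatic from $\tilde{r}|_{\tilde A} = \mathrm{id}_{\tilde A}$ and $\sigma(\tilde A) = A$.

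The main obstacle will be the compatibility in the descent step, i.e.\ ensuring that the locally defined retraction $\tilde{r}$ is $\sigma$-equivariant on fibers so that $r$ is single-valued. This is handled by refining the clopen partitions $\{\Omega_{s}\}$ so that they are compatible with the exceptional fibers of $\sigma$ — a refinement which is always available by combining Corollary~\ref{zero-loc}, the geometric blowup construction of Section~2, and the closedness theorem, and is the technical heart of the argument. Once this compatibility is secured, Proposition~\ref{order} and elimination of valued field quantifiers guarantee that the resulting $r$ is a definable, continuous retraction.
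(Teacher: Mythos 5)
Your first move (apply Theorem~\ref{main-2} to the ideal generated by $g_{1},\dots,g_{r}$) matches the paper, but after that the architecture diverges and two of your steps have genuine gaps. The paper's proof is an induction on the dimension $N$: Theorem~\ref{ret} is obtained by rerunning the proof of \cite[Theorem~1]{Now-5}, with its Lemma~3.1 replaced by Lemma~\ref{ind-var-1}, whose hypothesis is precisely that the theorem already holds on smooth submanifolds of dimension $<N$. Your proposal has no induction, and this is not cosmetic: your second step asks for a definable retraction of the normal crossing divisor $E'$ onto an \emph{arbitrary} closed definable subset $\tilde{A}$, which is essentially the statement being proved, one dimension lower. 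The ``nearest-point choice along the $\theta$-directions'' you invoke is not available: in a non-Archimedean field closest points are badly non-unique, definable Skolem functions are absent, and the closedness theorem gives closedness of projections, not a selection. The paper instead handles the resolved situation by passing to the disjoint union $\coprod E_{k}$ of the smooth exceptional hypersurfaces, each of dimension $<N$, and applying the induction hypothesis to each of them.

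Your descent step would also fail as stated. You push $\tilde{r}$ down by $r(x):=\sigma(\tilde{r}(y))$ for any $y\in\sigma^{-1}(x)\cap E'$, which requires $\sigma\circ\tilde{r}$ to be constant on the fibers of $\sigma|_{E'}$; over points of a center these fibers are projective spaces, and a retraction built from local coordinate data on a clopen partition has no reason to be constant on them --- refining the partition only chops the fibers further (they are totally disconnected in the $K$-topology) and cannot create the needed equivariance. The paper avoids the issue entirely by descending one blowup at a time: from a retraction $\rho_{j}\colon Z^{\tau_{j}}\to A^{\tau_{j}}$ upstairs one gets, via \cite[Corollary~2.13]{Now-5}, a retraction of $Z^{\tau_{j-1}}$ onto $C_{j-1}\cup A^{\tau_{j-1}}$ --- the center is simply absorbed into the target, so no fiberwise compatibility is needed --- and then the induction hypothesis applied to the smooth center $C_{j-1}$ retracts it onto $C_{j-1}\cap A^{\tau_{j-1}}$, yielding $\rho_{j-1}$. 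To repair your outline you would need to supply both the missing induction on dimension and a replacement for this center-absorbing, blowup-by-blowup descent.
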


We immediately obtain

\begin{corollary}\label{ret-2}
For each closed definable subset $A$ of $K^{n}$,
there exists an definable retraction $K^{n} \to A$.   \hspace*{\fill} $\Box$
\end{corollary}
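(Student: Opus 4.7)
The plan is to deduce Corollary~\ref{ret-2} from Theorem~\ref{ret} by an essentially tautological reduction: take the ambient space $K^n$ itself as the zero locus $X$. Setting $M = K^n$, $r = 1$, and $g_1 \equiv 0$, we have $g_1$ trivially definable and strong analytic, and $X = V(g_1) = K^n$. Any closed definable subset $A \subset K^n$ is then automatically a closed definable subset of $X$, so Theorem~\ref{ret} directly produces a definable retraction $X \to A$, which is precisely the desired definable retraction $K^n \to A$.

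The one point that needs checking is that $K^n$ really is a (definable) strong analytic manifold in the sense of Section~2, since the paper's paradigmatic ambient space is the bounded box $M_0 = (K^\circ)^k \times (K^{\circ\circ})^l$. This is immediate, however: $K$ decomposes definably as the disjoint union of the clopen subsets $K^\circ$ and $K \setminus K^\circ$, and the inversion map $x \mapsto 1/x$ is a definable, strong analytic isomorphism between $K \setminus K^\circ$ and $K^{\circ\circ} \setminus \{0\}$. Taking products, $K^n$ is exhibited as a strong analytic manifold covered by finitely many pairwise disjoint clopen definable charts, each bianalytically equivalent to an open subset of some $(K^\circ)^{k} \times (K^{\circ\circ})^{l}$ with $k+l = n$.

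The hard part, if any, is purely a matter of framing: recognizing that the hypotheses of Theorem~\ref{ret} are broad enough to cover the non-affinoid ambient space $K^n$, and that the trivial choice $g_1 \equiv 0$ is permitted. Once this formal setup is in place, no additional work is needed; in particular, the corollary does not require a fresh appeal to resolution of singularities, to the closedness theorem, or to any technology beyond what is already encapsulated in Theorem~\ref{ret}. This explains the parenthetical \emph{``We immediately obtain''} in the text preceding the statement.
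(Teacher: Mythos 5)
Your reduction is correct and is exactly what the paper intends: the corollary carries no proof beyond the tombstone following \emph{``We immediately obtain,''} and the only natural deduction is to apply Theorem~\ref{ret} with $M = K^{n}$, $g_{1}\equiv 0$, $X=V(g_{1})=K^{n}$. Your extra check that $K^{n}$ is a definable strong analytic manifold (via the clopen decomposition $K=K^{\circ}\sqcup(K\setminus K^{\circ})$ and inversion onto $K^{\circ\circ}\setminus\{0\}$) is a reasonable and valid way to justify the hypothesis on $M$, which the paper leaves implicit.
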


The case of analytic structures, determined on complete rank one valued fields $K$ by separated power series, was already established in our previous paper~\cite[Theorem~1]{Now-5}. Using the results of this paper, we can carry out that proof to the general settings of separated analytic structure as outlined below. Our proof made use of the following basic tools:

$\bullet$ elimination of valued field quantifiers (due to Cluckers--Lipshitz--Robinson~\cite{L-R,C-Lip-R,C-Lip-0,C-Lip});

$\bullet$ embedded resolution of singularities and transforming an ideal to normal crossings by blowing up (due to Bierstone--Milman~\cite{BM} or Temkin~\cite{Tem-2});

$\bullet$ the technique of quasi-rational and $R$-subdomains (due to Lipshitz--Robinson~\cite{L-R-0});

$\bullet$ and the closedness theorem~\cite{Now-1,Now-2,Now-3}.

\begin{remark}
Observe that the advantage of working here with the more flexible, strong analytic settings lies also in that we do not need to appeal to the theory of quasi-affinoid subdomains.
\end{remark}

Now, we are able, after some elaboration, to repeat that previous proof, via the definable version of transformation to normal crossings treated here, except for~\cite[Lemma~3.1]{Now-5} recalled below, because the full version of resolution of singularities seems to be unavailable in the definable settings.

\begin{lemma}\label{ind-var}
Let $Z \varsubsetneq X$ be two closed subvarieties of $M$ and $A$ a closed definable subset of $Z$. Suppose that $X$ is non-singular of dimension $N$ and Theorem~\ref{ret} holds for closed definable subsets of every non-singular variety of this kind of dimension $< N$. Then there exists an definable retraction $r:Z \to A$.
\end{lemma}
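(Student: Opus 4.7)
\emph{Proof plan.} The plan is to follow the strategy of Lemma~3.1 in~\cite{Now-5}, substituting the definable transformation to normal crossings from Theorem~\ref{main-2} for the full resolution of singularities used there, which is unavailable in the present definable setting.

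First I would express $Z$ locally as the common zero locus of finitely many strong analytic functions on $X$ via Corollary~\ref{zero-loc}, and then apply Theorem~\ref{main-2} to the resulting sheaf of ideals. This yields a finite composition $\sigma : X' \to X$ of admissible blowups along smooth strong analytic centers such that $X'$ is a smooth strong analytic manifold of dimension $N$, the total transform $D := \sigma^{-1}(Z)$ is a normal crossing divisor $\bigcup_{i} D_{i}$ with smooth definable components $D_{i}$ of dimension $N-1$, and the exceptional locus of $\sigma$ is contained in $D$. Repeated application of the closedness theorem to the individual blowups shows that $\sigma$ is definably closed, so $A' := \sigma^{-1}(A)$ is closed and definable in $D$.

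Second, I would build a definable retraction $\rho : D \to A'$. Using Corollary~\ref{zero-loc} together with the characterization of normal crossing divisors following it, partition $X'$ into finitely many clopen definable pieces on each of which $D$ is cut out by a subset of strong analytic coordinates. Each component $D_{i}$ is a smooth definable submanifold of dimension $N-1 < N$ of the same kind as $X$, so the inductive hypothesis (Theorem~\ref{ret} in dimension $< N$) supplies definable retractions onto prescribed closed definable subsets. Construct $\rho$ by induction on the number of components meeting at a point: starting from the deepest stratum, retract each additional component onto the union of the previously treated strata with $A'$, and glue across transverse intersections via the local product structure of the normal crossing coordinates, keeping the pieces definable thanks to Proposition~\ref{ord-part} and the closedness theorem.

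Finally, descend $\rho$ to $r : Z \to A$. Off the image $B \subset Z$ of the centers of $\sigma$, the map $\sigma$ is an isomorphism, so $r$ is forced to equal $\sigma \circ \rho \circ \sigma^{-1}$ there; on $B$, a closed definable subvariety of $Z$ of dimension $< N$ (a finite union of images of lower-dimensional centers), apply the inductive hypothesis to produce a definable retraction $B \to A \cap B$, and glue the two definitions along a finite clopen definable partition of $Z$ separating $B$ from its complement, justified again by the closedness theorem. The main obstacle lies precisely in this descent step: because $K$ has no definable Skolem functions, one cannot simply pick a canonical preimage under $\sigma$. In~\cite{Now-5} the analogous difficulty is handled via the quasi-affinoid subdomain theory of Lipshitz--Robinson, which the present paper deliberately avoids; as emphasized in the Introduction and Section~2, the expected remedy is that the more flexible strong analytic category, together with the definable clopen charts constructed in Section~2 and the closedness theorem, supplies enough finiteness and properness to carry out the descent coherently after a sufficiently fine finite clopen partition of $Z$.
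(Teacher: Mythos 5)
There are two genuine gaps. First, at the outset you try to prove the lemma in its stated generality by expressing $Z$ ``locally as the common zero locus of finitely many strong analytic functions via Corollary~\ref{zero-loc}'' and then applying Theorem~\ref{main-2}. Corollary~\ref{zero-loc} concerns closed analytic \emph{submanifolds}, not arbitrary closed subvarieties, and Theorem~\ref{main-2} requires \emph{global} generators $f_1,\ldots,f_s$ of the ideal; since the rings $A^{\dagger}_{m,n}(K)$ are not known to be Noetherian, an arbitrary closed subvariety need not admit such a presentation, and a merely local one cannot be fed into Theorem~\ref{main-2}. The paper does not close this gap either: it explicitly replaces Lemma~\ref{ind-var} by the weaker Lemma~\ref{ind-var-1}, in which $Z=V(f_1,\ldots,f_s)$ is given by global strong analytic equations (noting that only the hypersurface case $s=1$ is needed in the induction proving Theorem~\ref{ret}). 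Your proposal silently assumes the reduction that the paper deliberately avoids.

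Second, and more seriously, your descent step fails as described. You propose to set $r=\sigma\circ\rho\circ\sigma^{-1}$ off the image $B$ of the centers, to retract $B$ onto $A\cap B$ by induction, and to glue ``along a finite clopen definable partition of $Z$ separating $B$ from its complement''. No such clopen partition exists in general: $B$ is closed but not open in $Z$, and gluing a map on the open set $Z\setminus B$ with a map on the closed set $B$ does not yield a continuous map. You yourself flag this as ``the main obstacle'' and then appeal to an unspecified remedy, so the key step is simply missing. The paper's mechanism is different: it descends one blowup at a time. Given a definable retraction $\rho_j\colon Z^{\tau_j}\to A^{\tau_j}$ upstairs (obtained on the exceptional divisors via the inductive hypothesis and the canonical map from $\coprod E_k$ onto $\bigcup E_k$), Corollary~2.13 of~\cite{Now-5} produces a definable retraction $Z^{\tau_{j-1}}\to C_{j-1}\cup A^{\tau_{j-1}}$ downstairs; the inductive hypothesis applied to the smooth center $C_{j-1}$ (of dimension $<N$) gives a definable retraction $C_{j-1}\to C_{j-1}\cap A^{\tau_{j-1}}$, which glues with the identity on $A^{\tau_{j-1}}$ along a \emph{closed} overlap to give $C_{j-1}\cup A^{\tau_{j-1}}\to A^{\tau_{j-1}}$; composing yields $\rho_{j-1}$, and iterating from $j=k$ down to $j=0$ produces $\rho_0\colon Z\to A$. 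Without citing or reproving that blowup-descent result, your argument does not go through.
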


In our paper~\cite{Now-5}, this lemma holds in full generality. But in the proof of Theorem~\ref{ret}, it was involved in an induction procedure and used only when $Z$ was the zero locus of one analytic function
$$ \psi_{j} = (\psi_{j-1} \circ \sigma_{j}) \cdot \chi_{j} \ \ \text{with} \ \ j=0,\ldots,k, $$
thus being an analytic hypersurface of $M$ in the algebro-geometric sens. (By abuse of notation, we often use the same letter for an analytic subvariety and its support, i.e.\ underlying topological space, which does not lead to confusion.) Hence it suffices to prove here the following version (where $s=1$ would be enough):

\begin{lemma}\label{ind-var-1}
Let $X$ be a closed, strong analytic submanifold of $M$ of dimension $N$, $f_{1},\ldots,f_{s}$ be strong analytic functions on $X$, $Z := V(f_{1},\ldots,f_{s})$ and
$A$ be a closed definable subset of $Z$. Suppose $Z$ is of dimension $< N$ and that Theorem~\ref{ret} holds for closed definable subsets of every closed, strong analytic submanifold of $M$ of dimension $< N$. Then there exists an definable retraction $r:Z \to A$.
\end{lemma}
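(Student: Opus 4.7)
The plan is to apply Theorem~\ref{main-2} to the ideal $\mathcal{I} = (f_1,\ldots,f_s)$ on $X$ to obtain a composite $\sigma : X' \to X$ of admissible blowups with smooth strong analytic centers such that $\sigma^{-1}(Z)$ coincides, as a set, with the exceptional normal crossing divisor $E_k$ on $X'$, while $\sigma$ restricts to an isomorphism over $X \setminus Z$. By the closedness theorem, the map $\sigma|_{E_k} : E_k \to Z$ is a definably closed surjection. Setting $\widetilde{A} := \sigma^{-1}(A) \subset E_k$, the task reduces to constructing a definable retraction $\widetilde{r} : E_k \to \widetilde{A}$ whose composite $\sigma \circ \widetilde{r}$ is constant on each fibre of $\sigma|_{E_k}$; the desired retraction is then given by $r(z) := \sigma(\widetilde{r}(x))$ for any $x \in \sigma^{-1}(z)$, with definability provided by the definable closedness of $\sigma|_{E_k}$ applied to the graph of $\sigma \circ \widetilde{r}$.

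To build $\widetilde{r}$, I would combine Corollary~\ref{zero-loc} with the characterization of normal crossing divisors from Section~2 to partition $X'$ into finitely many pairwise disjoint clopen definable charts, on each of which $E_k$ appears as a union of finitely many smooth hypersurfaces $H_1,\ldots,H_m$ of dimension $N-1$ having only normal crossings, each $H_i$ being the zero locus of an analytic submersion to $K$. The construction proceeds by induction on $m$: the base case $m=1$ is a direct invocation of the inductive hypothesis, since $H_1$ is a closed strong analytic submanifold of dimension $N-1 < N$ and $\widetilde{A} \cap H_1$ is closed definable. For the inductive step, one first applies the inductive hypothesis to $H_1$ (of dimension $N-1$) with the closed definable target $(H_1 \cap (H_2 \cup \ldots \cup H_m)) \cup (\widetilde{A} \cap H_1)$, then handles the lower-dimensional configuration $H_2 \cup \ldots \cup H_m$ by a further application of the induction on components; the two partial retractions glue along $H_1 \cap (H_2 \cup \ldots \cup H_m)$, and the resulting local retractions match across the clopen partition to yield a global definable $\widetilde{r}$.

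The hard part will be guaranteeing the fibre-compatibility $\sigma(\widetilde{r}(x_1)) = \sigma(\widetilde{r}(x_2))$ whenever $\sigma(x_1) = \sigma(x_2)$. Because $\sigma$ is an isomorphism away from the blowup centres (which all lie inside $Z$), the only obstruction occurs over points of $Z$ where fibres of $\sigma$ are positive-dimensional. To overcome this, I would perform the induction sketched above not on the final $X'$ alone but sequentially along the chain $X' = M_k \to M_{k-1} \to \ldots \to M_0 = X$: by reverse induction on $j$, one constructs a definable retraction of the relevant divisor in $M_j$ onto the appropriate pull-back of $A$, imposing fibre-compatibility with $\sigma_{j+1}$ at each stage; the closedness theorem then ensures that the pushforward through $\sigma_{j+1}$ remains definable. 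Since each intermediate configuration consists of smooth hypersurfaces of dimension $N-1$, the inductive hypothesis that Theorem~\ref{ret} holds in dimension $<N$ is precisely what is required to close the argument.
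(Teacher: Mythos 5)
Your opening move coincides with the paper's: apply Theorem~\ref{main-2} to the ideal $(f_1,\ldots,f_s)$ on $X$, observe that the total preimage of $Z$ in the final space is the union of the exceptional hypersurfaces, and retract that union onto the pull-back of $A$ by exploiting the fact that each component is a smooth closed submanifold of dimension $N-1$, to which the hypothesis of the lemma applies (your induction on the number $m$ of components, gluing along $H_1\cap(H_2\cup\cdots\cup H_m)$, is one reasonable way to implement what the paper dispatches via the finite canonical surjection $\coprod E_k\to\bigcup E_k$). The divergence, and the problem, is in how you descend the tower of blowups.

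You propose to push the upstairs retraction $\widetilde r$ directly down through $\sigma$, which forces the fibre-compatibility condition $\sigma(\widetilde r(x_1))=\sigma(\widetilde r(x_2))$ whenever $\sigma(x_1)=\sigma(x_2)$. You correctly identify this as the hard part, but you never discharge it: over a point of a blowup centre the fibre is a copy of $\mathbb{P}^{q-1}(K)$, and nothing in your component-by-component construction of $\widetilde r$ (which is built from retractions of the individual hypersurfaces $H_i$, chosen without reference to the fibration) makes $\widetilde r$ collapse such a positive-dimensional fibre into a single fibre over $A$; saying that one performs the construction ``imposing fibre-compatibility with $\sigma_{j+1}$ at each stage'' names the requirement rather than meeting it. The paper avoids this requirement altogether: descending one blowup $\sigma_j\colon M_j\to M_{j-1}$ at a time, it invokes \cite[Corollary~2.13]{Now-5} to produce from the retraction $\rho_j\colon Z^{\tau_j}\to A^{\tau_j}$ a definable retraction of $Z^{\tau_{j-1}}$ onto $C_{j-1}\cup A^{\tau_{j-1}}$ --- i.e.\ the target downstairs is enlarged by the centre, so no constancy on exceptional fibres is needed --- and then separately retracts $C_{j-1}$ (a smooth submanifold of dimension $<N$, hence covered by the inductive hypothesis) onto $C_{j-1}\cap A^{\tau_{j-1}}$, composing the two to land in $A^{\tau_{j-1}}$. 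To repair your argument you would either have to prove that a fibre-compatible $\widetilde r$ exists (not at all clear) or adopt this ``retract onto centre-union-target, then retract the centre'' two-step, which is an additional idea your proposal is missing.
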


\begin{proof}
Apply Theorem~\ref{main-2} to transform to normal crossings the sheaf of ideals $\mathcal{I}$ generated by $f_{1},\ldots,f_{s}$ on the analytic manifold $X$. Set
$$ \tau_{j} := \sigma_{1} \circ \ldots \sigma_{j}, \ \  j=1,\ldots,k, \ \ \text{and} \ \ A^{\tau} := \tau^{-1}(A). $$
Then $Z^{\tau_{k}} = \bigcup E_{k}$. Considering the canonical map from the disjoint union $\coprod E_{k}$ onto $ \bigcup E_{k}$ and using the assumption of the lemma, it is not difficult to check that there is a definable retraction $\rho_{k}: Z^{\tau_{k}} \to A^{\tau_{k}}$.

Therefore, by op.cit., Corollary~2.13, there is a definable retraction $r_{k-1}: Z^{\tau_{k-1}} \to (C_{k-1} \cup A^{\tau_{k-1}})$. Again by the assumption, there is a definable retraction $C_{k-1} \to (C_{k-1} \cap A^{\tau_{k-1}})$, and hence a definable retraction $\rho_{k-1}: Z^{\tau_{k-1}} \to A^{\tau_{k-1}}$.

As before, by op.cit., Corollary~2.13, there is a definable retraction $r_{k-2}: Z^{\tau_{k-2}} \to C_{k-2} \cup A^{\tau_{k-2}})$.  Again by the assumption, there is a definable retraction $C_{k-2} \to (C_{k-2} \cap A^{\tau_{k-2}})$, and hence a definable retraction $\rho_{k-2}: Z^{\tau_{k-2}} \to A^{\tau_{k-2}}$.

Proceeding recursively, we eventually achieve a definable retraction $\rho_{0}: Z \to (A)$, we are looking for.
\end{proof}

\begin{remark}
It is plausible that the above results will also hold in more general settings of certain tame non-Archimedean geometries considered in the papers~\cite{Hal} and~\cite{C-Com-L}.
\end{remark}

Perhaps the strongest, purely topological, non-Archimedean results on retractions are those from the papers~\cite{Da} and~\cite{K-K-S} recalled below respectively.

\begin{theorem}
1) Any closed subset $A$ of an ultranormal metrizable space $X$ is a retract of $X$.

2) Any compact metrizable subset $A$ of an ultraregular space $X$ is a retract of $X$.
\end{theorem}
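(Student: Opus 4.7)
The idea in both parts is to exploit the abundance of clopen sets granted by the non-Archimedean separation axioms, using a clopen partition of $X\setminus A$ in place of the partitions of unity that drive the real-analytic proofs: on each piece the retraction is simply a carefully chosen, nearby point of $A$. The reference to $\cite{Da}$ in part (1) signals de Groot's classical characterisation that a metrisable space is ultranormal if and only if it admits a compatible ultrametric, and this is what makes the construction natural.

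\textbf{Part (1).} Fix a compatible ultrametric $d$ on $X$. For $x\in X\setminus A$ set $r(x):=d(x,A)/2>0$, and consider the family of (cl)open balls $\{B(x,r(x)):x\in X\setminus A\}$. A short ultrametric calculation shows that this family is in fact a partition of $X\setminus A$: whenever $y\in B(x,r(x))$ the ultrametric triangle inequality forces $d(y,a)=d(x,a)$ for every $a\in A$, hence $r(y)=r(x)$ and $B(y,r(y))=B(x,r(x))$. For each piece $B$ of the partition (with $2r_{B}:=d(B,A)$) pick, using AC, a point $a_{B}\in A$ satisfying $d(B,a_{B})\le 4r_{B}$, and define
\[
   \rho(x)\;:=\;\begin{cases} x & \text{if } x\in A,\\ a_{B} & \text{if } x\in B.\end{cases}
\]
Continuity on the clopen set $X\setminus A$ is automatic because $\rho$ is locally constant there; continuity at $a\in A$ follows since $x_{n}\to a$ forces $d(x_{n},A)\to 0$, so $r_{B_{n}}\to 0$ and hence $d(x_{n},\rho(x_{n}))\le 4r_{B_{n}}\to 0$.

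\textbf{Part (2).} Since $A$ is compact and inherits the ultraregular topology of $X$, it is ultrametrisable, so part (1) already delivers a retraction if $A$ is sitting inside an ultrametric ambient space. To handle a general ultraregular $X$ I would build, by induction on $n$, a hierarchical sequence of finite clopen partitions $\{U^{n}_{1},\dots,U^{n}_{k_{n}}\}$ of clopen neighbourhoods of $A$ such that each $U^{n}_{i}\cap A$ has diameter $<1/n$ (with respect to a fixed ultrametric on $A$) and the $(n{+}1)$-st partition refines the $n$-th. At each stage, ultraregularity provides, for every $a\in A$, a clopen neighbourhood in $X$ whose trace on $A$ has prescribed small diameter; compactness of $A$ reduces this to a finite subcover, and disjointness is obtained by the standard trick $V\mapsto V\setminus\bigcup_{j<i}V_{j}$. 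Choosing one base point $a^{n}_{i}\in A\cap U^{n}_{i}$ per piece and setting $\rho(x):=a^{n}_{i}$ when $x$ lies in $U^{n}_{i}$ at the finest level available (with $\rho|_{A}=\mathrm{id}_{A}$) produces the required retraction.

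\textbf{Main obstacle.} The delicate point in both parts is continuity at the points of $A$: one must ensure that the clopen piece on which $\rho$ is constant shrinks towards $A$ as the base point does. In (1) this is painless because choosing the radius $r(x)=d(x,A)/2$ automatically couples the size of the piece to its distance from $A$. The real difficulty lies in (2): in the absence of a global metric on $X$ the same coupling must be engineered by hand, using ultraregularity to manufacture, and compactness of $A$ to finitely refine, a hierarchical clopen cover that realises the diameter control. It is this interplay—ultraregularity supplies local separations, compactness glues them into a global refinement tower—that constitutes the technical heart of the theorem.
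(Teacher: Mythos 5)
This theorem is not proved in the paper at all: it is quoted as background, with part (1) attributed to Dancis \cite{Da} and part (2) to K\c{a}kol--Kubzdela--\'Sliwa \cite{K-K-S}. So there is no in-text proof to compare against, and your attempt has to be judged on its own. Part (1) is correct and complete: the reduction of ultranormality to the existence of a compatible ultrametric is exactly de Groot's theorem (matching the hypothesis $\mathrm{Ind}\,X=0$ in the title of \cite{Da}), the isosceles inequality does show that $y\in B(x,r(x))$ forces $d(y,a)=d(x,a)$ for all $a\in A$, hence that the balls $B(x,d(x,A)/2)$ partition $X\setminus A$ into clopen pieces on each of which all points are equidistant from $A$, and the estimate $d(x,\rho(x))\le 2d(x,A)$ gives continuity at points of $A$.

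Part (2) has the right architecture (a refining tower of finite clopen partitions of neighbourhoods of $A$ with shrinking traces on $A$), but the definition of $\rho$ via ``the finest level available'' is not well defined. Writing $\Omega_n:=\bigcup_i U^n_i$, a point $x\in\bigcap_n\Omega_n\setminus A$ has no finest level, and this set is in general nonempty: a compact metrizable subset of an ultraregular space need not be a $G_\delta$ (take $A$ a single point of $\{0,1\}^I$ with $I$ uncountable; then \emph{every} decreasing sequence of neighbourhoods of $A$ has intersection strictly larger than $A$, so your recipe leaves $\rho$ undefined on most of the space). The repair is standard and uses the compactness of $A$ once more: for such $x$ the traces $A\cap U^n_{i_n(x)}$ form a nested sequence of nonempty compact sets of diameter $<1/n$, so their intersection is a single point, and one sets $\rho(x)$ equal to it; one must also send the clopen set $X\setminus\Omega_1$ to an arbitrary point of $A$ and discard pieces with $U^n_i\cap A=\emptyset$. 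With these amendments the continuity check you sketch (every piece $U^n_i$ is mapped into the set $A\cap U^n_i$ of diameter $<1/n$) goes through, so the gap is real but local to the definition of the map, not to the strategy.
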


\section{Intricacies of non-Archimedean analytic geometry}

In this final section, we discuss some background behind quantifier elimination in non-Archimedean analytic geometry. The theory of semi- and sub-analytic sets was first developed over the real field (cf.~\cite{Loj,Gab,Hi}) with the three powerful tools: Gabrielov's~\cite{Gab} complement theorem (in other words, quantifier simplification for the real analytic structure), and Hironaka's~\cite{Hi} resolution of singularities and flattening of analytic morphisms by blowing up.

\vspace{1ex}

In the locally compact case, real and $p$-adic, even full quantifier elimination in a 1-sorted analytic language was established by Denef--van den Dries~\cite{De-Dries}. Similar techniques, when applied over algebraically closed, complete, rank one valued fields $K$, require the use of various G-topologies (cf.~\cite{B-G-R,F-Put}) because the underlying metric topology is totally disconnected and non-locally compact. An analogous quantifier elimination over those fields would be available if a global rigid analogue of Hironaka's flattening were valid. However, the proof of such an analogue given by Gardener--Schoutens~\cite{G-Schou} failed to be true, as it was indicated in the following counterexample by Lipshitz--Robinson~\cite{L-R-1}.

\begin{example}
Denote by
$$ T_{n} := \left\{ \sum a_{\nu} \xi^{\nu} : \ |a_{\nu}| \rightarrow 0 \ \text{as} \ |\nu| \rightarrow \infty \right\} $$
the ring of strictly convergent power series over $K$ in the variables $\xi = (\xi_{1},\ldots,\xi_{n}$. For $\alpha = (\alpha_{1},\ldots,\alpha_{n})$ with $\alpha_{i} \in |K \setminus \{ 0 \}|$, the ring
$$ T_{n,\alpha} := \left\{ \sum a_{\nu} \, \xi^{\nu} : \ |a_{\nu}\alpha^{\nu}| \rightarrow 0 \ \text{as} \ |\nu| \rightarrow \infty \right\} $$
is the affinoid algebra of the rational polydisc of polyradius $\alpha$.

Let $D$ be the disc of $K$-rational radius $\varepsilon$; then $\mathcal{O}(D) = T_{1,\varepsilon}$. Suppose
$$ f  \in \mathcal{O}(D) \setminus \bigcup_{\delta > \varepsilon} \, T_{1,\delta} $$
which means that $f$ converges on $D$ and is not overconvergent. For instance, take
$$ f := \sum_{n \geq 1} \, a^{n - n^{2}} \xi^{n^{2}} \ \ \text{with} \ \ |a| = \varepsilon <1. $$

Put
$$ X := \mathrm{Sp}\, T_{2} \ \ \text{and} \ \ Y := \mathrm{Sp}\, T_{4}/I, \ \ I := (\xi_{2} - f(a\xi_{4}), \xi_{1} - a\xi_{4}) \cap (\xi_{3}). $$

The Ref \cite[Theorem~5.4]{L-R-1} says that then the map
$$ \varphi: Y \to X, \ \ (\xi_{1},\xi_{2},\xi_{3},\xi_{4}) \mapsto (\xi_{1},\xi_{2}). $$
cannot be flattened by a finite sequence of local blowups. Its subtle proof relies on the concept of a flatificator at an analytic point defined in a wide affinoid neighbourhood, thus involving the theory of Berkovich spaces. This allows the authors to reduce the initial problem to that of the existence of a curve defined in an affinoid sub-polydisc of $X$ without analytic continuation to a larger polydisc.
\end{example}

Not only does the proof by Gardener--Schoutens have a serious gap, but also their quantifier elimination fails indeed. Given an algebraically closed, complete, rank one valued field $K$, Cluckers--Lipshitz~\cite[Theorem~4.3]{C-Lip} constructed a strictly convergent subanalytic subset $X \subset K^{3}$ which is not quantifier-free definable in the 1-sorted analytic language of strictly convergent analytic structures. Nevertheless, then quantifier elimination holds in the 1-sorted analytic language of separated analytic structures (cf.~\cite[Theorem~4.5.15]{C-Lip-0}).

\vspace{1ex}

Recently Ducros~\cite{Duc} develops (inspired by Raynaud--Gruson~\cite{Ray-Gru}) flattening techniques for Berkovich spaces over complete, rank one valued fields $K$. One of the essential ingredients of his approach (namely Lemma~1.18) is a consequence of Temkin's version of the Gerritzen--Grauert theorem~\cite[Theorem~3.1]{Tem-1}. Using those techniques, he proves Theorem~7.8 that the image of a morphism between compact analytic spaces is a finite union of the images of maps each of which is a finite composite of blowups and quasi-\'{e}tale morphisms. Eventually, Ducros anticipates that it geometrically corresponds, if the ground field $K$ is algebraically closed, to quantifier elimination for the separated analytic structure on $K$ (which is a definitional extension of the strictly convergent one by solutions of certain polynomial Henselian systems considered by Cluckers--Lipshitz~\cite{C-Lip}).

\vspace{1ex}

We conclude the paper with some comments. The collections of $A_{m,n}$ and $A_{m,n}^{\dag}$ correspond respectively to the collections of $S_{m,n}^{\circ}$ and $S_{m,n}$, which were earlier studied in the paper~\cite{L-R-0} in the case of complete, rank one valued fields $K$. Since the rings $A_{m,n}^{\dag} = S_{m,n}$ have good algebraic properties, we were able in our previous paper~\cite{Now-5} to use the classical version of canonical desingularization (along with the theory of quasi-affinoid subdomains).

\vspace{1ex}

Generally, however, separated analytic structures admit reasonable quantifier elimination, but usual resolution of singularities from rigid analytic geometry is not available. The opposite situation holds for strictly convergent analytic structures. It is thus of great importance that definable desingularization for the former structures, provided in this paper, makes both these powerful tools of analytic geometry available at the same time. And let us emphasize once again that the work within strong analytic manifolds and maps allows us not to appeal to the theory of quasi-affinoid subdomains.

\vspace{1ex}

A further direction of research might be into definable Lipschitz retractions and extending definable Lipschitz continuous functions (perhaps with the same Lipschitz constant) over non-locally compact fields. These as yet open problems may be investigated in analytic structures and in the tame non-Archimedean geometries from the papers~\cite{Hal} and~\cite{C-Com-L} as well. Extending Lipschitz continuous functions $f: A \to \mathbb{R}$, with the same Lipschitz constant from a subset $A$ of $\mathbb{R}^{n}$, goes back to McShane and Whitney. The more difficult case of functions with values in $\mathbb{R}^{k}$ was achieved by Kirszbraun. Aschenbrenner--Fischer~\cite{Asch} obtained a definable version of Kirszbraun's theorem for definably complete expansions of ordered fields. Recently Cluckers--Martin~\cite{C-Ma} established a $p$-adic version of Kirszbraun's theorem. They proved it, along with the existence of a definable Lipschitz retraction (with constant $1$) for any closed definable subset $A$ of $\mathbb{Q}_{p}^{n}$, proceeding with simultaneous induction on the dimension $n$ of the ambient space. To this end, they introduced a certain form of preparation cell decompositions with Lipschitz continuous centers. Besides, their construction of definable retractions makes use of some definable Skolem functions. Therefore we cannot expect that their approach can be directly carried over to geometry over non-locally compact Henselian fields, where cells are no longer finite in number (but parametrized by residue field variables) and definable Skolem functions do not exist in general. The non-locally compact case will certainly require a new approach and ingenious ideas.

\vspace{1ex}

Let me finally mention that my work in non-Archimedean geometry was inspired by the joint paper with J.~Koll\'{a}r~\cite{K-N}, which deals with the very concept and extension of continuous hereditarily rational functions on real and $p$-adic varieties, and the results of which were further carried over to non-locally compact fields in my papers~\cite{Now-1,Now-2}.

\vspace{1ex}

\vspace{1ex}

\begin{small}
Institute of Mathematics, \ Faculty of Mathematics and Computer Science

Jagiellonian University

ul.~Profesora S.\ \L{}ojasiewicza 6, \ 30-348 Krak\'{o}w, Poland

{\em E-mail address: nowak@im.uj.edu.pl}

ORCID ID: https://orcid.org/0000-0001-8070-032X
\end{small}

\end{document}